\renewcommand{\epsilon}{\varepsilon}
\renewcommand{\setminus}{\smallsetminus}
\renewcommand{\emptyset}{\varnothing}
\newtheorem{theorem}{Theorem}[section]
\newtheorem{proposition}[theorem]{Proposition}
\newtheorem{corollary}[theorem]{Corollary}
\newtheorem{lemma}[theorem]{Lemma}
\theoremstyle{definition}
\theoremstyle{remark}
\newtheorem{remark}[theorem]{Remark}
\newcommand{\lk}{\mathrm{lk}}
\def\Vightarrow#1{\smash{\mathop{\longrightarrow}\limits^{#1}}}
\newcommand{\normal}{\lhd}
\newcommand{\Char}{\mathrm{char}}
\newcommand{\Ker}{\mathrm{Ker}}
\newcommand{\gr}{\mathrm{gr}}
\newcommand{\Q}{\mathbb Q}
\newcommand{\Z}{\mathbb Z}
\newcommand{\N}{\mathbb N}
\newcommand{\R}{\mathbb R}
\newcommand{\FP}{\operatorname{FP}}
\newcommand{\Ho}{\operatorname{H}}
\newcommand{\cohom}[3]{H^{{\raise1pt\hbox{$\scriptstyle#1$}}}(#2\>\!,#3)}
\newcommand{\tatecohom}[3]%
  {\widehat H^{{\raise1pt\hbox{$\scriptstyle#1$}}}(#2\>\!,#3)}
\newcommand{\Cohom}[3]%
  {H^{{\raise1pt\hbox{$\scriptstyle#1$}}}\big(#2\>\!,#3\big)}
\newcommand{\Tatecohom}[3]%
  {\widehat H^{{\raise1pt\hbox{$\scriptstyle#1$}}}\big(#2\>\!,#3\big)}
\newcommand{\homol}[3]{{\mathrm H}_{{\lower1pt\hbox{$\scriptstyle#1$}}}(#2\>\!,#3)}
\newcommand{\homolog}[2]{{\mathrm H}_{{\lower1pt\hbox{$\scriptstyle#1$}}}(#2)}
\renewcommand{\ker}{\operatorname{Ker}}
\newcommand{\im}{\operatorname{Im}}
\newcommand{\Tor}{\operatorname{Tor}}
\newcommand{\GG}{\mathfrak g}
\title[Coabelian ideals in right angled Artin  Lie algebras]{Coabelian ideals in  $\mathbb{N}$-graded Lie algebras  and applications to  right angled Artin Lie algebras}
\author{D.~ H. ~Kochloukova}
\address{Dessislava H.~Kochloukova, Department of Mathematics, University of Campinas, 
13083 - 859 Campinas, SP, Brazil}\email{desi@unicamp.br}
\author{C.~Mart\'inez-P\'erez}
\address{Conchita Mart\'inez-P\'erez, Departamento de Matem\'aticas, Universidad de Zaragoza,
50009 Zaragoza, Spain} \email{conmar@unizar.es}
\keywords{}
\subjclass[2000]{%
17B55, 20J05}
\thanks{}
\begin{document}

\thispagestyle{empty}

\begin{abstract} We consider homological  finiteness properties $FP_n$  of certain $\mathbb{N}$-graded Lie algebras. After proving some general results,  see Theorem A, Corollary B and Corollary C,   we concentrate on a family that can be considered as the Lie algebra version of the generalized Bestvina-Brady  groups  associated to a graph $\Gamma$.  We prove that the homological finiteness properties of these  Lie  algebras can be determined in terms of the graph in the same way as in the group case.
\end{abstract}

\maketitle

\section{Introduction}
In this paper we study homological properties of certain $\mathbb{N}$-graded Lie algebras. Recall that a Lie algebra $L$ is $\mathbb{N}$-graded if  $L = \oplus_{i \geq 1} L_i$ for $L_i$ such that $[L_i, L_j] \subseteq L_{i+j}$ for $i,j \geq 1$. If not otherwise stated  the Lie algebras we consider are over a field $K$ of arbitrary characteristic.

In Section \ref{homgrad} we study homological finiteness properties of $\mathbb{N}$-graded Lie algebras $L$. Recall that a Lie algebra is of homological type $FP_m$ if the trivial $U(L)$-module $K$ has a projective resolution, where all projectives in dimension smaller or equal to $m$ are finitely generated.  If such a projective resolution exists then there is a free resolution with the same property. We note that a Lie algebra $L$ is of type $FP_1$ precisely when $L$ is finitely generated and if $L$ is finitely presented (in terms of generators and relations) then $L$ is of type $FP_2$. It is an open problem whether  there exist a Lie algebra that is of type $FP_2$ but is not finitely presented.  But  by \cite[(2.13)]{Weigel} for $\mathbb{N}$-graded Lie algebras the properties $FP_2$ and finite presentability coincide,  see for more details Lemma \ref{FP2}.    We show in Proposition \ref{homgrad2} that $\mathbb{N}$-graded Lie algebras also behave  like pro-$p$ groups in the sense that  $L$ is of homological type $FP_m$ if and only if $\Ho_i(N,K)$ is finite dimensional over $K$ for every $i \leq m$. 
 Using homological methods we  show the following surprising result.

\medskip
\noindent {\bf Theorem A.} {\it  Let $L = \oplus_{i \geq 1} L_i$ be an $\mathbb{N}$-graded Lie algebra  that is $FP_{n}$ such that   $[L,L]= \oplus_{i \geq 2} L_i $ and $M$ be a proper ideal of $L$ such that $[L,L]\leq M$.
	 Then  $M$ is of type $FP_n$ if and only if for every Lie subalgebra  $N$
	of $L$ of codimension one such that $M \subseteq N$  we have that $N$ is $FP_n$.}

\medskip
Since an $\mathbb{N}$-graded Lie algebra $L$ is of type $FP_1$ (resp. $FP_2$) if and only if it is finitely generated (resp. finitely presented) Theorem A implies  immediately the following corollary.

\medskip
\noindent {\bf Corollary B.} {\it  Let $L = \oplus_{i \geq 1} L_i$ be an $\mathbb{N}$-graded Lie algebra  that is finitely generated (resp. finitely presented) such that   $[L,L]= \oplus_{i \geq 2} L_i $ and $M$ be a proper ideal of $L$ such that $[L,L]\leq M$.
	Then  $M$ is finitely generated (resp. finitely presented) if and only if for every Lie subalgebra  $N$
	of $L$ of codimension one such that $M \subseteq N$  we have that $N$ is finitely generated (resp. finitely presented).}

\medskip
Theorem A together with a result of  Wasserman \cite{Wasserman}, whose proof uses the HNN-construction for Lie algebras,  imply another surprising result.

\medskip

\noindent {\bf Corollary C.} {\it  Let $L = \oplus_{i \geq 1} L_i$ be an $\mathbb{N}$-graded Lie algebra  that is finitely presented (in terms of generators and relations). Assume that   $[L,L] = \oplus_{i \geq 2} L_i $ and that $L$ does not contain an ordinary non-abelian free Lie subalgebra. Then $[L,L]$ is a finitely generated Lie algebra.}

\medskip 
 
There are examples of finitely presented metabelian Lie algebras 
$L$ with infinitely generated $[L,L]$ if $\mathrm{char}(K) \not= 2$ (for example see  Claim 1 from subsection 5.3 in \cite{K-M1}   but such examples are not $\mathbb{N}$-graded). Thus the hypothesis of Corollary C  that $L$ is $\mathbb{N}$-graded  is   indispensable.

 Bestvina and Brady constructed in \cite{B-B} groups of type $\FP_2$ that are not finitely presented. For a finite  graph $\Gamma=(V(\Gamma),E(\Gamma))$ (graphs are assumed to be without loops or multiple edges all throughout the paper) the right angled Artin group $G_{\Gamma}$ is generated by the vertices $V(\Gamma)$ of $\Gamma$  with relations $[v_1, v_2] = 1$, whenever $v_1$ and $v_2$ are linked by an edge in $E(\Gamma)$. The Bestvina-Brady group $H_{\Gamma}$ is the kernel of the map $G_{\Gamma} \to \mathbb{Z}$ that sends every vertex to 1. Let $\Delta_{\Gamma}$ be the flag complex of $\Gamma$, i.e.,  the complex obtained from $\Gamma$ after gluing a simplex to every clique (complete subgraph) of $V(\Gamma)$. By the main result in \cite{B-B}  $H_{\Gamma}$ is finitely presented if and only if $\Delta_\Gamma$ is 1-connected.  A combinatorial proof of the backward direction was given by  Dicks and Leary in  \cite{D-L}. Furthermore by \cite{B-B} $H_{\Gamma}$ is of type $\FP_m$ if and only if $\Delta_\Gamma$ is $(m-1)$-acyclic. These results were later generalized to kernels of arbitrary homomorphisms (often called real characters) $G\to\R$ by Meier, Meinert and van Wyk (\cite{M-M-W}, see below).

A graph $\Gamma$ as before also determines a Lie ring (over $\mathbb{Z}$) defined as
 $$\GG_{\Gamma} = Fr (V(\Gamma)) / \langle \langle [v,w] = 0 \hbox { if } v, w \hbox{ are end points of an edge from } E \rangle \rangle,$$ where $Fr(V(\Gamma))$ is the free Lie ring (over $\mathbb{Z}$) with a free basis the set $V(\Gamma)$ and $\langle \langle R \rangle \rangle$ denotes the ideal generated by $R$. 
 The lower central series of a group $G$ yields a Lie ring $\gr(G)$  (over $\mathbb{Z}$) $$\gr( G) = \oplus_{k \geq 1} \gamma_k(G)  /
\gamma_{k+1}(G) $$ 
 which is   $\mathbb{N}$-graded and generated by elements in degree one. In the case of a right angled Artin group $G_\Gamma$, there is a natural isomorphism of Lie rings  $\gr( G_{\Gamma}) \simeq \GG_{\Gamma}$, see \cite{D-K}, \cite{D-K2},  \cite[Thm.~3.4]{P-S2}. Moreover,  Papadima and Suciu showed  in
   \cite[Cor.~9.6]{P-S3}  that if    $H_1(\Delta_{\Gamma}, \mathbb{Q}) = 0$, the inclusion map $H_{\Gamma} \to G_{\Gamma}$ induces a group isomorphism $H_{\Gamma}' \to G_{\Gamma}'$ and an isomorphism of the derived subalgebras of the $\mathbb{N}$-graded Lie algebras $\gr(G_{\Gamma})\otimes_\Z \Q$   and $\gr(H_{\Gamma})\otimes_\Z \Q $.  The approach of Papadima and Suciu  uses  the theory of 1-formality   \cite{P-S}, \cite{P-S2}, \cite{P-S3}. We will not use techniques involving 1-formality in this paper but observe that 1-formality requires $\Char(K) = 0$. The results in this paper are proved for  a field $K$ of  an arbitrary characteristic. 
 
Consider the $K$-Lie algebra  $L_{\Gamma} = \gr(G_{\Gamma})\otimes_\Z K=\GG_\Gamma\otimes_\Z K$ for a fixed field $K$.
Let $\pi:L_\Gamma\to L_\Gamma/[L_\Gamma,L_\Gamma]$ be the canonical projection and consider a non-zero linear map  $\chi:L_\Gamma/[L_\Gamma,L_\Gamma]\to K$. Put $I_{\chi} = \pi^{-1}(\ker(\chi))$.
The living graph $\Gamma_{\chi}$ is the full subgraph of $\Gamma$ spanned by the vertices with non-zero $\chi$-value. Let $\Delta_{\Gamma_{\chi}}$ be the flag complex of $\Gamma_{\chi}$.  
For a (possibly empty) clique $w\subseteq\Gamma$, $\lk_{\Delta_\Gamma}(w)$  is defined as  either $\Delta_\Gamma$ if $w=\emptyset$ or the link in  $\Delta_\Gamma$ of the simplex associated to $w$  otherwise. We also set 
 	$$ \lk_{\Delta_{{\Gamma}_{\chi}}}(w) : = \lk_{\Delta_{\Gamma}}(w ) \cap \Delta_{{\Gamma}_{\chi}}.$$

 Note that any codimension 1 ideal of $L_\Gamma$ is of the form $I_\chi$ for some $\chi$. 	In the following result we use homological methods to classify  when those ideals are of type $FP_n$. 
 A similar computation for Bestvina-Brady groups can be found in \cite{L-M} and for cocyclic normal subgroups of certain Artin groups in \cite{BCMP}.	
	 	
 	\medskip
\noindent 	{\bf Theorem D.} {\it  The Lie algebra $N = I_{\chi}$ is of type $FP_n$ if and only if $~ \lk_{\Delta_{\Gamma_{\chi}}}(w)$ is  $(n-1 - |w| )$-acyclic  over the field $K$ for every (possibly empty) clique $w\subseteq\Gamma\setminus\Gamma_\chi$. For $w= \emptyset$ this translates to the flag complex $\Delta_{\Gamma_{\chi}}$ is $(n-1)$-acyclic  over $K$ .}
 	
 	\medskip
	
	 This result together with Theorem A above yield a 
a complete classification of the coabelian ideals $M$ of $L=L_\Gamma$ which are  of type $FP_n$.  A group theoretic version for right angled Artin groups  was proved by Meier, Meinert and van Wyk in \cite{M-M-W} as a corollary of their description  of the  Bieri-Neumann-Strebel-Renz $\Sigma$-invariants for  right angled Artin groups. 
  In Theorem \ref{groupsbigrank} in Subsection \ref{section-groups}  we give a group theoretic version of Theorem A for the right angled Artin group  $G_{\Gamma}$ that involves the Bieri-Neumann-Renz-Strebel $\Sigma$-theory. 

We state two corollaries of  Theorem D . The first of them, Corollary E below, characterizes when the ideal $I_{\chi}$ is finitely generated as a Lie algebra.
Although it can be deduced from Theorem A, in Section \ref{appendix1}  we  show how to prove it using only elementary combinatorial methods. For the statement, recall that we say that a subgraph $\Gamma_1$ of $\Gamma$ is {\it dominant} in $\Gamma$ if for every $v \in V(\Gamma) \setminus V(\Gamma_1)$ there is $w \in V(\Gamma_1)$ that is linked with $v$ by an edge in $\Gamma$. This definition is important for the description of the Bieri-Neumann-Strebel-Renz $\Sigma^1$-invariant of a right angled Artin group $G$ given by Meier and van Wyk in \cite{Meier}, which was later generalised in \cite{M-M-W}.

\medskip

\noindent  {\bf Corollary E.} {\it   
The ideal $I_{\chi}$  of $L_{\Gamma}$  is finitely generated as a Lie algebra if and only if  $\Gamma_{\chi}$ is connected and  dominant in $\Gamma$. In this case $I_{\chi}$ is generated as a Lie algebra by $\Ker(\chi)$.
	}

\medskip 

Our second corollary to  Theorem D    can be viewed, in the specific case $m = 2$, as a Lie algebra version of the results of Dicks and Leary in \cite{D-L}. 

\medskip 

\noindent {\bf Corollary F.} {\it The $\mathbb{N}$-graded Lie algebra $\gr(H_{\Gamma})\otimes_\Z K $ is of type $\FP_m$ 
	if and only if the flag complex $\Delta_{\Gamma}$  is $(m-1)$-acyclic over $K$, i.e.  $\Ho_i(\Delta_{\Gamma},K) = 0$  for $ i\leq m-1$. } 

\medskip

In particular, Corollary F  classifies when $\gr(H_{\Gamma})\otimes_\Z K $ is a finitely presented Lie algebra since for $\mathbb{N}$-graded Lie algebra $FP_2$ and finite presentability are the same.
Note that by \cite{B-B} we know that  the group  $H_{\Gamma}$ is finitely presented if and only if the flag complex  $\Delta_{\Gamma}$ is 1-connected and by Corollary F  $\gr(H_{\Gamma})\otimes_\Z K$  is a finitely presented Lie algebra if and only if the flag complex  $\Delta_{\Gamma}$ is 1-acyclic. In the case $K = \mathbb{Q}$ this was earlier proved using 1-formality by Papadima and Suciu in \cite{P-S3} .

Finally in Section \ref{appendix2}  we use geometric methods to prove the following result,  which gives a sufficient condition for  a coabelian ideal $M$ of a right angled Artin Lie algebra $L_\Gamma$  to be of type $FP_n$. 

\medskip

\noindent {\bf Theorem G. }{\it Let $L = L_{\Gamma}$ and  $[L,L]\leq M\normal L$ be an ideal of codimension $k$. Let $X$ be the set of all subsets of $V(\Gamma)$ which generate complete subgraphs and $Y\subseteq X$ the set of those $A\in X$ such that $M\cap L_A$ has corank $k$ in $L_A$. Assume that
$\lk_{|Y|}(Z)$ is $(n-i-1)$-acyclic for any $Z\in X \setminus Y$ with $|Z|=i$. Then $M$ is of 
type $\FP_n$.}

\medskip
{\bf Acknowledgements} During the preparation of this work the first named author was partially supported by CNPq grant 301779/2017-1  and by FAPESP grant 2018/23690-6. The second named author was partially supported by PGC2018-101179-B-I00 and by Grupo \'Algebra y Geometr\'ia, Gobierno de Arag\'on and Feder 2014-2020 \lq\lq Construyendo Europa desde Arag\'on".

\section{On the homological finiteness property $FP_m$ of $\mathbb{N}$-graded Lie algebras}\label{homgrad}

We denote by $\mathbb{N}$ the set of positive natural numbers and $\mathbb{N}_0 = \mathbb{N} \cup \{ 0 \}$. An $\mathbb{N}$-graded Lie algebra $L$ is the direct sum $\oplus_{i \in \mathbb{N}} L_i$, where each $L_i$ is $K$-vector subspace of $L$ and $[L_i, L_j] \subseteq L_{i+j}$. Furthermore if each $L_i$ is finite dimensional we call $L$ an $\mathbb{N}$-graded Lie algebra of {\it finite} type. Note that every finitely generated $\mathbb{N}$-graded Lie algebra $L$ is of finite type. Furthermore an $\mathbb{N}$-graded Lie algebra $L$ is finitely generated if and only $L/ [L, L] \simeq H_1(L, K)$ is finite dimensional.

Note that when $L$ is an $\mathbb{N}$-graded Lie algebra there is a natural $\mathbb{N}_0$-grading on the universal enveloping algebra $R = U(L) $ as $\oplus_{i \in \mathbb{N}_0} R_i$ where $R_0 = K.1$ and $R_i = \sum_{i_1 + \ldots + i_j = i, j \geq 1   }L_{i_1} \ldots L_{i_j}$. An $\mathbb{N}_0$-graded $R$-module $V = \oplus_{i \in \mathbb{N}_0} V_i$ has the property that $V_i R_j \subseteq V_{i+j}$. From now we write graded $R$-module for $\mathbb{N}_0$-graded $R$-module and a homomorphism
$\varphi : V = \oplus_{i \geq 0} V_i \to W = \oplus_{i \geq 0} W_i$ between graded $R$-modules is called graded if it is a homomorphism of $R$-modules and preserves the grading i.e. $\varphi(V_i) \subseteq W_i$. Here and all along the paper, module means {\it right} module.

One important property of  a graded $R$-module $V$ is that we have the following version of the Nakayama Lemma: For an $R$-module $V$, $V = 0$ if and only if $V \otimes_{R} K = 0$ (see \cite{Weigel}, note that although in \cite{Weigel} all graded Lie algebras and modules are assumed to be of finite type this property remains true in general, i.e., if the graded objects are not of finite type).
 Furthermore, $V$ is finitely generated as $R$-module if and only if $V \otimes_{R} K$ is finite dimensional  (over $K$) , see \cite[Section 2]{Weigel}. 
 
 \begin{lemma} \label{FP2} Let $L$ be an $\mathbb{N}$-graded Lie algebra. Then $L$ is of type $FP_2$ if and only if $L$ is finitely presented (in terms of generators and relations).
 	\end{lemma}

 \begin{proof} The non-trivial direction is to show that when $L$ is $FP_2$ then $L$ is finitely presented.
 	Note that for every Lie algebra $L$ of type $FP_2$ the homology groups $\Ho_1(L, K) \simeq L/ [L,L]$ and $\Ho_2(L, K)$ are finite dimensional.  In particular, since $L/ [L,L]$ is finite dimensional, $L$ is finitely generated and hence  $L$ is an $\mathbb{N}$-graded Lie algebra of finite type.
 	   By \cite[(2.13)]{Weigel} if an $\mathbb{N}$-graded Lie algebra $L$  of finite type  has finite dimensional $\Ho_2(L, K) = \Tor_2^{U(L)}(K,K)$ then $U(L)$ is a finitely presented graded associative algebra, hence $L$ is a finitely presented Lie algebra.
 \end{proof}

Let $L$ be an $\mathbb{N}$-graded Lie algebra. We say that $L$ is {\it graded} $FP_m$ if there is a graded projective resolution ( i.e. of graded $U(L)$-modules with graded homomorphisms) of the trivial $U(L)$-module  $K$, where all projective modules in dimension smaller or equal to $m$ are finitely generated.

Let $L = \oplus_{i \geq 1} L_i$ be an $\mathbb{N}$-graded Lie algebra. An ideal $N$ of $L$ is a {\it graded} ideal if $N = \oplus_{i \geq 1} N \cap L_i$.

The following is a Lie graded algebra version of the pro-$p$-groups result \cite[Thm.~A]{King}. 

\begin{proposition}  \label{homgrad2} Let $L$ be an $\mathbb{N}$-graded Lie algebra over a field $K$ and $N$ be a graded ideal such that $U(L/ N)$ is left and right Noetherian. Then
	$L$ is graded $FP_m$ if and only if $\Ho_i(N, K)$ is finitely generated as $U(L/ N)$-module for every $i \leq m$.
This implies that both graded $FP_m$ and ordinary $FP_m$ are the same property.	\end{proposition}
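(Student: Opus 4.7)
The plan is to combine the (graded) Lyndon--Hochschild--Serre spectral sequence of the extension $0 \to N \to L \to L/N \to 0$,
\[
E^2_{p,q} = H_p(L/N, H_q(N,K)) \;\Longrightarrow\; H_{p+q}(L,K),
\]
with two preliminary observations. The first is a characterisation of graded $FP_m$ by finite dimensionality of homology: for any $\mathbb{N}$-graded Lie algebra $\Lambda$, $\Lambda$ is graded $FP_m$ if and only if $H_i(\Lambda,K)$ is finite dimensional for every $i \le m$. Iterating the graded Nakayama lemma recalled just before the statement, a minimal graded free resolution of $K$ can be constructed by choosing, at each stage $i$, a graded basis of the vector space $\Omega^i \otimes_{U(\Lambda)} K \cong H_{i+1}(\Lambda,K)$ and lifting to $\Omega^i$, so that the $i$-th term $P_i$ is finitely generated precisely when $H_i(\Lambda,K)$ is finite dimensional. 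The second preliminary is that, since $U(L/N)$ is Noetherian, every finitely generated graded $U(L/N)$-module $M$ admits a resolution by finitely generated graded free $U(L/N)$-modules (inductively, since the kernel of a finite graded surjection is again finitely generated by Noetherianity); tensoring such a resolution with $K$ over $U(L/N)$ gives a complex of finite dimensional $K$-vector spaces, hence $H_p(L/N,M) = \mathrm{Tor}_p^{U(L/N)}(M,K)$ is finite dimensional for every $p$.

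For the ``if'' direction, assume $H_q(N,K)$ is finitely generated over $U(L/N)$ for all $q \le m$. The second preliminary makes $E^2_{p,q}$ finite dimensional for every $p$ and every $q \le m$; convergence of the spectral sequence forces $H_n(L,K)$ to be finite dimensional for $n \le m$, and the first preliminary concludes that $L$ is graded $FP_m$. For the ``only if'' direction, assume $L$ is graded $FP_m$, so each $H_n(L,K)$ with $n \le m$ is finite dimensional, and argue by induction on $q \le m$ that $H_q(N,K)$ is finitely generated over $U(L/N)$; the case $q = 0$ is immediate since $H_0(N,K) = K$. For the inductive step, the key observation is that position $(0,q)$ admits no outgoing differentials but receives incoming differentials $d_r : E^r_{r,q-r+1} \to E^r_{0,q}$ for $2 \le r \le q+1$, so $E^\infty_{0,q}$ is the quotient of $E^2_{0,q} = H_q(N,K) \otimes_{U(L/N)} K$ by the successive images of these differentials. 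The inductive hypothesis together with the second preliminary makes every $E^2_{r,q-r+1}$ with $r \ge 2$ finite dimensional, hence the same holds for every subquotient $E^r_{r,q-r+1}$ and for every image of $d_r$; as only finitely many such differentials contribute, the kernel of $E^2_{0,q} \twoheadrightarrow E^\infty_{0,q}$ is finite dimensional. Since $E^\infty_{0,q}$ sits in the finite dimensional $H_q(L,K)$ as the bottom term of the column filtration, this forces $E^2_{0,q}$ to be finite dimensional, and the graded Nakayama lemma gives that $H_q(N,K)$ is finitely generated over $U(L/N)$, closing the induction.

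For the final sentence equating graded $FP_m$ with ordinary $FP_m$: any ordinary finitely generated projective resolution computes $H_i(L,K)$, forcing it to be finite dimensional for $i \le m$ whenever $L$ is ordinary $FP_m$, and the first preliminary then promotes this to a finitely generated graded free resolution; the converse is tautological. The main obstacle is the inductive step of the ``only if'' direction, where one must transfer finite dimensionality from the subquotient $E^\infty_{0,q}$ of $H_q(L,K)$ back to $E^2_{0,q}$ itself: the Noetherian hypothesis on $U(L/N)$ is precisely what is needed, via the second preliminary, to show that the kernel of $E^2_{0,q} \twoheadrightarrow E^\infty_{0,q}$ stays finite dimensional.
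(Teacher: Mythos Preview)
Your proof is correct, but it follows a genuinely different route from the paper's.

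The paper argues directly with partial resolutions. For the ``only if'' direction it takes any graded free resolution $\mathcal{P}$ of $K$ over $U(L)$ with finitely generated terms up to degree $m$ and observes that $\mathcal{P}\otimes_{U(N)}K$ is a complex of finitely generated projective $U(L/N)$-modules, so its homology is finitely generated by Noetherianity. For the ``if'' direction it builds the resolution inductively: given a partial graded resolution with $m$-th syzygy $B_m$, an explicit diagram chase shows that $\Ker(B_m\otimes_{U(N)}K\to P_{m-1}\otimes_{U(N)}K)\cong H_m(N,K)$, whence $B_m\otimes_{U(N)}K$ is finitely generated over $U(L/N)$, $B_m\otimes_{U(L)}K$ is finite dimensional, and graded Nakayama finishes. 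No spectral sequence appears.

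Your argument instead first establishes, via minimal graded free resolutions, that graded $FP_m$ is equivalent to $\dim_K H_i(L,K)<\infty$ for $i\le m$ (which in the paper is \emph{derived} afterwards as Corollary~\ref{fpm} by specialising the Proposition to $N=L$), and then runs a Lyndon--Hochschild--Serre edge argument at the column $p=0$. This is more conceptual and makes the role of the Noetherian hypothesis very transparent: it is exactly what keeps the incoming $d_r$-images at $(0,q)$ finite dimensional. The paper's approach, by contrast, is more elementary and self-contained, needing nothing beyond long exact $\Tor$ sequences and graded Nakayama; it also yields the homological characterisation of $FP_m$ as a by-product rather than as an input. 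Both approaches hinge on the same two ingredients (graded Nakayama and Noetherianity of $U(L/N)$), so neither is strictly stronger, but your inversion of the logical order---proving the corollary first and using it to attack the proposition---is worth noting.
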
 

\begin{proof} Suppose first that 	$L$ is graded $FP_m$.  Then $L$ is ordinary $FP_m$.  Any projective resolution ${\mathcal P}$ of the trivial $U(L)$-module $K$ with finitely generated modules in dimension up to $m$ can be used to calculate the homology groups $\Ho_i(N,K) = \Ho_i({\mathcal P} \otimes_{U(N)} K)$. Since ${\mathcal P} \otimes_{U(N)} K$ is a complex of projective $U(L/N)$-modules and up to dimension $m$ all modules are finitely generated, using Noetherianess we deduce that 
	$\Ho_i(N,K)$ is finitely generated as $U(L/ N)$-module for every $i \leq m$.
	
	For the converse we built by induction on $m \geq 0$ a free graded resolution ${\mathcal P}$ of the trivial $U(L)$-module $K$ with finitely generated modules in dimension up to $m$. The case $m = 0$ is obvious. Suppose we have built the resolution up to dimension $m-1$, then we have an exact graded  complex of right $U(L)$-modules ( i.e. all modules and homomorphisms are graded) 
	$$
	{\mathcal P} : 0 \to B_m \to P_{m-1} \to \ldots \to P_1 \to P_0 \to K \to 0,
	$$
	 where $P_0, \ldots, P_{m-1}$ are free $U(L)$-modules.
	Consider the complex ${\mathcal P} \otimes_{U(N)} K$.  We claim that
	\begin{equation} \label{30godini}
	\Ker (B_m \otimes_{U(N)} K \to P_{m-1} \otimes_{U(N)} K ) \simeq \Ho_m(N,K).
	\end{equation}
	Indeed let
	$$
	{\mathcal Q} : \ldots \to Q_{m+1} \to Q_{m} \to P_{m-1} \to \ldots \to P_1 \to P_0 \to K \to 0
	$$
	be a free graded resolution of $U(L)$-modules (i.e. all homomorphisms are graded and the modules are free).
	Since tensoring is right exact, the exactness of $Q_{m+1} \to Q_m \to B_m \to 0$ implies that 
	$$Q_{m+1} \otimes_{U(N)} K \Vightarrow{\alpha} Q_m \otimes_{U(N)} K \Vightarrow{\beta} B_m \otimes_{U(N)} K \to 0 \hbox{ is exact},$$
	 hence $$\im (\alpha) = \Ker (\beta).$$ Note that
	$$
	\Ho_m(N,K) = \Ker (Q_m \otimes_{U(N)} K \to P_{m-1} \otimes_{U(N)} K )/ \im ( \alpha ) = $$ $$ \Ker (Q_m \otimes_{U(N)} K \to P_{m-1} \otimes_{U(N)} K )/ \Ker ( \beta ) \simeq$$ $$ \beta ( \Ker (Q_m \otimes_{U(N)} K \to P_{m-1} \otimes_{U(N)} K ) ) = \Ker (B_m \otimes_{U(N)} K \to P_{m-1} \otimes_{U(N)} K ),
	$$ hence (\ref{30godini}) holds.

	 Note that the  $U(L/N)$-module  $P_{m-1} \otimes_{U(N)} K$ is finitely generated  and since $U(L/N)$ is Noetherian $\im (B_m \otimes_{U(N)} K \to P_{m-1} \otimes_{U(N)} K )$ is finitely generated  too. This combined with (\ref{30godini}) implies that
	$B_m \otimes_{U(N)} K$ is a finitely generated $U(L/N)$-module, hence
	 \begin{equation} \label{dimension} B_m \otimes_{U(L)} K   \hbox{  is finite dimensional over } K . \end{equation} 
	  Recall that all modules and homomorphisms in $\mathcal P$ are graded, in particular $B_m$ is a graded $U(L)$-module.  Thus (\ref{dimension}) implies that $B_m$ is a finitely generated graded $U(L)$-module. Then we can choose $P_m$ as any finitely generated graded free $U(L)$-module together with a graded epimorphism $P_m \to B_m$.
	  
	   Finally if $L$ is ordinary $FP_m$ then the first paragraph of the proof implies that $\Ho_i(N, K)$ is finitely generated as $U(L/ N)$-module for every $i \leq m$. Then $L$ is graded $FP_m$.
\end{proof}
Applying the proposition for $N = L$ we obtain

\begin{corollary} \label{fpm} Let $L$ be an $\mathbb{N}$-graded Lie algebra over a field $K$. Then 
		$L$ is of homological type $FP_m$ if and only if $\Ho_i(L, K)$ is finite dimensional for every $i \leq m$.
	\end{corollary}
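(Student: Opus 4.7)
The plan is to invoke Proposition \ref{homgrad2} directly with the specialization $N = L$. This is legitimate because $L$ itself is a graded ideal of $L$ (the sum of all its homogeneous components), so the setup of the proposition applies. With this choice the quotient Lie algebra $L/N$ is zero, hence its universal enveloping algebra is $U(L/N) = K$, which is (trivially) both left and right Noetherian; in particular the Noetherian hypothesis of the proposition is automatic.

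Under this specialization, a $U(L/N)$-module is just a $K$-vector space, so the condition ``$\Ho_i(N,K)$ is finitely generated as a $U(L/N)$-module'' for $i \le m$ degenerates precisely to ``$\Ho_i(L,K)$ is finite dimensional over $K$'' for $i \le m$. The proposition also asserts in its final line that for an $\mathbb{N}$-graded Lie algebra the properties graded $FP_m$ and ordinary $FP_m$ coincide, so the equivalence stated in the corollary follows at once.

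There is essentially no obstacle to overcome here: the corollary is a direct substitution into Proposition \ref{homgrad2}, which was already formulated in sufficient generality to cover this case. The only thing to remark on, if desired, is the translation between finite generation over $U(L/N)=K$ and finite $K$-dimension, which is immediate.
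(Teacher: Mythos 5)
Your proof is correct and is exactly the paper's argument: the authors likewise obtain the corollary by specializing Proposition \ref{homgrad2} to $N = L$, so that $U(L/N)=K$ and finite generation over $U(L/N)$ becomes finite $K$-dimension.
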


 \subsection{Cohomological finiteness of coabelian ideals}
 
 \ \ \ \ \ \ \ \ \ \ \ \ \ \ \ \ \ \ \ \ \ \ \ \ \ \ \ \ \ \ \ \ \ \ \ \ \ \ \ \ \ \ \ \ \ 
     
 Let $M$ be a coabelian ideal of an $\N$-graded Lie algebra $L$, i.e., an ideal such that $[L,L]\leq M$. In this subsection we will prove that the cohomological finiteness properties of $M$ can be determined by the cohomological finiteness properties of all those codimensional one ideals $N$ of $L$ such that $M\leq N$.

\begin{proposition} \label{prop-embed} Let $R = K[x_1, \ldots, x_k]$ and $S = K[x_2, \ldots, x_k]$ be commutative polynomial rings.   We view $S$ as a left $R$-module via a surjective homomorphism of $K$-algebras $\theta : R \to S$ that 
 maps $Kx_1 + \ldots + K x_k$ surjectively  to $Kx_2 + \ldots + K x_k$. Let ${\mathcal P}$ be a complex
	$$
	{\mathcal P} : \ldots \Vightarrow{}  P_i ~ \Vightarrow{\partial_i} ~ P_{i-1}  \Vightarrow{} \ldots \Vightarrow{} P_0 \Vightarrow{} 0
	$$
	of free $R$-modules. Then $\Ho_i(\mathcal{P}) \otimes_{R}S$  embeds in 
	$\Ho_i({\mathcal{P} \otimes_{R} S})$.
	\end{proposition}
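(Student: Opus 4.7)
The plan is to realize $S$ as a quotient of $R$ by multiplication by a single non-zero-divisor, and then read off the desired embedding from the long exact homology sequence of the corresponding short exact sequence of complexes.

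First I would analyze the kernel of $\theta$. The restriction $\theta|_{Kx_1+\cdots+Kx_k}$ is a linear surjection from a $k$-dimensional space onto a $(k-1)$-dimensional space, so its kernel is spanned by a single element $y\in Kx_1+\cdots+Kx_k$. Picking $y_2,\ldots,y_k\in Kx_1+\cdots+Kx_k$ whose images form a $K$-basis of $Kx_2+\cdots+Kx_k$, the elements $y,y_2,\ldots,y_k$ form a basis, $R=K[y,y_2,\ldots,y_k]$, and $\theta$ sends $y\mapsto 0$ while restricting to an isomorphism on $K[y_2,\ldots,y_k]\to S$. Hence $\ker(\theta)=yR$ and $S\cong R/yR$. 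Since $R$ is an integral domain and $y\neq 0$, multiplication by $y$ is injective on $R$, giving the short exact sequence of $R$-modules
\[
0\longrightarrow R\stackrel{\cdot y}{\longrightarrow} R\longrightarrow S\longrightarrow 0.
\]

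Next I would tensor this sequence with each free module $P_i$. Because each $P_i$ is $R$-free (hence $R$-flat), tensoring preserves exactness, producing a short exact sequence of chain complexes
\[
0\longrightarrow \mathcal{P}\stackrel{\cdot y}{\longrightarrow}\mathcal{P}\longrightarrow \mathcal{P}\otimes_R S\longrightarrow 0.
\]
The associated long exact sequence in homology reads
\[
\cdots\to H_i(\mathcal{P})\stackrel{\cdot y}{\to} H_i(\mathcal{P})\to H_i(\mathcal{P}\otimes_R S)\to H_{i-1}(\mathcal{P})\stackrel{\cdot y}{\to} H_{i-1}(\mathcal{P})\to\cdots,
\]
which I would truncate to extract the short exact sequence
\[
0\longrightarrow H_i(\mathcal{P})/yH_i(\mathcal{P})\longrightarrow H_i(\mathcal{P}\otimes_R S)\longrightarrow \ker\bigl(y\colon H_{i-1}(\mathcal{P})\to H_{i-1}(\mathcal{P})\bigr)\longrightarrow 0.
\]

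Finally I would identify $H_i(\mathcal{P})/yH_i(\mathcal{P})$ with $H_i(\mathcal{P})\otimes_R(R/yR)=H_i(\mathcal{P})\otimes_R S$, giving the desired embedding $H_i(\mathcal{P})\otimes_R S\hookrightarrow H_i(\mathcal{P}\otimes_R S)$. There is no real obstacle here: the whole argument rests on recognizing that the hypothesis on $\theta$ forces $\ker(\theta)$ to be principal and generated by a non-zero-divisor, after which the standard Koszul-style long exact sequence does all the work. The only mildly delicate point worth stating explicitly is the flatness of each $P_i$, which is what allows the tensored sequence of complexes to remain short exact.
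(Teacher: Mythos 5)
Your proof is correct, and it takes a genuinely cleaner route than the paper's. Both arguments hinge on the same key observation you isolate at the start: $\ker(\theta)$ is principal, generated by a single nonzero element $y$ of the linear span of the variables, so $S\cong R/yR$ with $y$ a nonzerodivisor and one has the two-step free resolution $0\to R\xrightarrow{\cdot y}R\to S\to 0$. Where you diverge is what you do with this. You tensor that short exact sequence with the whole complex $\mathcal P$ (legitimate because each $P_i$ is flat) to obtain the short exact sequence of complexes $0\to\mathcal P\xrightarrow{\cdot y}\mathcal P\to\mathcal P\otimes_R S\to 0$, and then read the desired injection $H_i(\mathcal P)\otimes_R S = H_i(\mathcal P)/yH_i(\mathcal P)\hookrightarrow H_i(\mathcal P\otimes_R S)$ directly off the long exact homology sequence. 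The paper instead works at the level of a single degree: it applies $\Tor^R_*(-,S)$ to $0\to\ker(\partial_i)\to P_i\to\im(\partial_i)\to 0$, computes $\Tor_1^R(\im(\partial_i),S)=0$ by hand (because $\im(\partial_i)$ sits inside the free module $P_{i-1}$ so $y$ acts injectively on it), concludes that $\alpha_i:\ker(\partial_i)\otimes_R S\to P_i\otimes_R S$ is injective, and then carries out an explicit diagram chase with the factorization $d_{i+1}=\alpha_i\gamma_i\beta_{i+1}$ to exhibit $H_i(\mathcal P)\otimes_R S$ as a subquotient of $\ker(d_i)/\im(d_{i+1})$. Your long-exact-sequence argument avoids that chase entirely and also yields, as a by-product, the cokernel $\ker\bigl(y:H_{i-1}(\mathcal P)\to H_{i-1}(\mathcal P)\bigr)$, which the paper's version does not make visible; the paper's version is more elementary in the sense of not invoking the homology long exact sequence of a short exact sequence of complexes, but it is harder to follow. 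Both are correct.
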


\begin{proof}

Note that since $P_i$ is a free $R$-module we have an exact sequence 
$$ 
0 = \Tor_1^R( P_i, S) \to \Tor_1^R( \im (\partial_i) , S) \to   \ker(\partial_i) \otimes_R S \Vightarrow{\alpha_i} P_i \otimes_R S  \Vightarrow{\beta_i} \im(\partial_i) \otimes_R S \to 0$$	
  which is part of the long exact sequence in $\Tor^R_*(- , S)$ applied to  the short exact sequence of $R$-modules 
$$0 \to \ker(\partial_i) \to P_i \to \im(\partial_i) \to 0.$$
Using $\theta$ we can construct a free resolution of $S$ as $R$-module 
$$0 \to R \Vightarrow{\mu}  R \Vightarrow{\theta} S \to 0,$$ to see it note that $\ker (\theta) = y R \not= 0$ for some  $y \in K x_1 + \ldots + K x_k$   and set $\mu(r) = y r$.    We  use this resolution to calculate $\Tor_1^R( \im (\partial_i) , S)$ i.e.
$$
\Tor_1^R( \im (\partial_i) , S) = \ker ( \im (\partial_i)  \Vightarrow{\nu_i}  \im (\partial_i)   ),$$
where $\nu_i$ is given by multiplication with $y$. Since $ \im (\partial_i) $ is an $R$-submodule of the free $R$-module  $P_{i-1} $  we deduce that $\ker(\nu_i) = 0$, hence $\Tor_1^R( \im (\partial_i) , S) = 0$ and $\alpha_i$ is injective.

The short exact sequence of $R$-modules  $0 \to \im(\partial_{i+1}) \to \ker(\partial_i) \to \Ho_i(\mathcal{P}) \to 0$ induces an exact sequence
$$
\im(\partial_{i+1}) \otimes_R S \Vightarrow{\gamma_i} \ker(\partial_i) \otimes_R S \to \Ho_i(\mathcal{P}) \otimes_R S \to 0.
$$ 
Then
$$
\Ho_i(\mathcal{P}) \otimes_R S \simeq ( \ker(\partial_i) \otimes_R S ) / \im(\gamma_i).$$
	Let $\{ d_i =  \partial_i \otimes id_S \} $  be the differentials of
$	{\mathcal P} \otimes_S R$.
Then
$$d_{i+1} =  \alpha_i \gamma_i \beta_{i+1}$$
and using that $\alpha_{i-1}$ is injective and $\beta_{i+1}$ is surjective we get
$$\Ho_i( 	{\mathcal P} \otimes_S R ) = \ker(d_i)/ \im (d_{i+1}) = \ker (  \alpha_{i-1} \gamma_{i-1} \beta_{i})/ \im (   \alpha_i \gamma_i \beta_{i+1} )  =$$ $$ 
\ker( \gamma_{i-1} \beta_{i} ) / \im (\alpha_i \gamma_i) \supseteq \ker (\beta_i) / \im (   \alpha_i \gamma_i) = \im (\alpha_i) /  \im (   \alpha_i \gamma_i) \simeq$$ $$ (\ker(\partial_i) \otimes_R S )/ \im (\gamma_i) \simeq \Ho_i(\mathcal{P}) \otimes_R S.
$$
	\end{proof}

 Let $R = K[x_1, \ldots, x_k]$  be a commutative polynomial ring. We can see $R$ as a graded ring, where  $x_1, \ldots, x_k$ have weight 1.

\begin{proposition}  \label{prop-homology}  Let  $i \geq 0$  be a fixed natural number. Let   $R = K[x_1, \ldots, x_k]$ be a commutative polynomial ring and  ${\mathcal P}$ be a graded complex
	$$
	{\mathcal P} : \ldots \Vightarrow{}  P_i ~ \Vightarrow{\partial_i} ~ P_{i-1}  \Vightarrow{} \ldots \Vightarrow{} P_0 \Vightarrow{} 0
	$$
	of free $R$-modules, where each $P_j$ is finitely generated for $j \leq i$. 
	Suppose that for every	$K$-algebra epimorphism $\theta : R \to S_0 = K[v]$, where $\theta(x_s) \in K v$ for $ 1 \leq s \leq k$, we have that 	$\Ho_i({\mathcal{P} \otimes_{R} S_0})$ is finite dimensional over $K$, where
	we view $S_0$ as a left $R$-module via $\theta$. Then
	 the homology group $\Ho_i(\mathcal{P})$
is finite dimensional over $K$. 
	\end{proposition}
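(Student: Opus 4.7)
The plan is to induct on the number of variables $k$. The base case $k=1$ is immediate: any $K$-algebra epimorphism $\theta:K[x_1]\to K[v]$ with $\theta(x_1)\in Kv$ must send $x_1$ to a nonzero scalar multiple of $v$ and is therefore an isomorphism, so $\mathcal{P}\otimes_R S_0\cong\mathcal{P}$ and the hypothesis already yields the conclusion.

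For the inductive step with $k\geq 2$, I will set $S:=K[x_2,\ldots,x_k]=R/(x_1)$ and consider the graded complex $\mathcal{P}':=\mathcal{P}\otimes_R S$ of free $S$-modules; this remains finitely generated in degrees $\leq i$. Any $K$-algebra surjection $\theta':S\to K[v]$ with $\theta'(x_s)\in Kv$ for $2\leq s\leq k$ composes with the projection $R\twoheadrightarrow S$ to produce a valid $\theta:R\to K[v]$ (sending $x_1$ to $0$) of the kind in the hypothesis, and $\mathcal{P}'\otimes_S K[v]=\mathcal{P}\otimes_R K[v]$. Thus $\mathcal{P}'$ satisfies the hypothesis of the proposition with $k$ reduced by one, and by the induction hypothesis $\Ho_i(\mathcal{P}')$ is finite-dimensional over $K$. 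Proposition \ref{prop-embed} then embeds $\Ho_i(\mathcal{P})\otimes_R S$ into $\Ho_i(\mathcal{P}')$, forcing $H/x_1 H$ to be finite-dimensional, where $H:=\Ho_i(\mathcal{P})$. Running the same reduction after an arbitrary $K$-linear change of basis of $R_1$ lets the role of $x_1$ be played by any nonzero $y\in R_1$, so that $H/yH$ is finite-dimensional for every $0\neq y\in R_1$.

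It then remains to deduce from this that $H$ itself is finite-dimensional. Since $R$ is Noetherian, $H=\ker(\partial_i)/\mathrm{im}(\partial_{i+1})$ is a finitely generated graded $R$-module; the condition that $H/yH$ be finite-dimensional for every nonzero linear form means, in geometric terms, that the projective support $\mathrm{Proj}(R/\Ann H)\subseteq\mathbb{P}^{k-1}$ is disjoint from every hyperplane, hence empty, so $\Ann H$ contains a power of the irrelevant ideal $\mathfrak{m}=(x_1,\ldots,x_k)$ and $H$ has finite length over $K$. I expect this final commutative-algebra reduction to be the main obstacle: while minimal graded primes of $\Ann H$ of Krull dimension $\geq 2$ are ruled out cleanly by Krull's Hauptidealsatz applied to a generic linear form, the case of Krull dimension exactly $1$ is subtler, and handling it may require a Hilbert-series argument or a sharper use of Proposition \ref{prop-embed} that exploits how $H$ arises as the $i$-th homology of the graded complex $\mathcal{P}$ rather than just as an abstract graded module.
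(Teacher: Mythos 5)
Your inductive framework and the use of Proposition \ref{prop-embed} to conclude that $H/yH$ is finite dimensional for every nonzero linear form $y\in R_1$ is correct, and it is essentially the same reduction the paper carries out. Indeed your route is slightly more economical than the paper's: you only need the codimension-one quotients $H/yH$, whereas the paper pushes further via a Tor long exact sequence to control $H\otimes_R K[x_s]$ (a quotient by $k-1$ linear forms) before running its contradiction argument.

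However, there is a genuine gap in your final commutative-algebra step, and it is one you half-noticed but misdiagnosed. The implication \lq\lq $H/yH$ finite dimensional for every nonzero linear form $y\in R_1$ $\implies$ $H$ finite dimensional'' is simply \emph{false} over a field that is not algebraically closed, in any Krull dimension, not just a subtle issue in dimension one. Concretely, take $K=\mathbb{R}$, $R=\mathbb{R}[x_1,x_2]$ and $H=R/(x_1^2+x_2^2)$: for every nonzero $y=ax_1+bx_2$ one computes $H/yH\cong\mathbb{R}[x]/(x^2)$, which is $2$-dimensional, yet $H$ itself is infinite dimensional. The reason your Nullstellensatz-flavoured argument fails is that over a non-closed field the (nonempty) projective scheme $\mathrm{Proj}(R/\Ann H)$ need not contain any $K$-rational point, so it can avoid every $K$-rational hyperplane. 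The paper's proof is aware of exactly this difficulty: its very first sentence reduces to $K$ algebraically closed (noting $\Ho_i(\mathcal{P}\otimes_K\overline{K})\cong\Ho_i(\mathcal{P})\otimes_K\overline{K}$), and its localization argument then explicitly invokes that maximal ideals of polynomial rings over $\overline{K}$ are generated by linear polynomials $y_j-\lambda_j$. With that reduction in place your argument closes cleanly: $V(\Ann H)$ is a homogeneous variety, so if it has dimension $\geq 1$ it contains a punctured line $K\lambda$, and any nonzero linear form $y$ vanishing at $\lambda$ (which exists since $k\geq 2$) puts $\lambda$ in $\mathrm{Supp}(H/yH)$, contradicting finite dimensionality; there is no special difficulty in the dimension-one case. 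So the missing idea is precisely the base change to $\overline{K}$, and that is what you should add at the start of the induction.
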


\begin{proof} 
	From the very beginning we can assume that $K$ is an algebraically closed field, otherwise we consider the complex ${\mathcal P} \otimes_K \overline{K}$ over $\overline{K}[x_1, \ldots, x_k]$, where $\overline{K}$ is the algebraic closure of $K$.  Then  $\Ho_i({\mathcal P} \otimes_K \overline{K}) \simeq \Ho_i({\mathcal P}) \otimes_K \overline{K}$ is finite dimensional over $\overline{K}$ if and only if 
	$\Ho_i({\mathcal P})$ is finite dimensional over $K$.
	
	In order to prove the proposition we proceed by induction on $k$. The case $k = 1$ is obvious as  $S_0 = R$ and  we can choose $\theta$ to be the identity map.
	Assume from now on that $k \geq 2$ and that the proposition holds for all polynomial rings on at most $k-1$ variables.

	Let 
	$$\begin{aligned}
	R_0 = R \Vightarrow{\theta_0} R_1 = K[x_2, \ldots, x_k] \Vightarrow{\theta_1} \ldots \to R_i = K[x_{i+1}, \ldots, x_k] \Vightarrow{\theta_i}\\
	 \to R_{i+1} = K[ x_{i+2}, \ldots, x_k] \to  \ldots \Vightarrow{\theta_{k-2}} R_{k-1} = K[x_k]\end{aligned}$$ be epimorphisms of $K$-algebras, where for each epimorphism $\theta_j$ we have  $\theta_j(K x_{j+1} + \ldots + K x_k) = K x_{j+2} + \ldots + K x_k $. Then by Proposition \ref{prop-embed}  $\Ho_i(\mathcal{P} \otimes_R R_j) \otimes_{R_j} R_{j+1}$ embeds in 
	$\Ho_i({\mathcal{P} \otimes_{R} R_{j+1}})$. By the assumptions of  Proposition \ref{prop-homology} for $S_0 = R_{k-1}$ and $\theta = \theta_{k-2}  \ldots \theta_0$ we deduce that 	$\Ho_i({\mathcal{P} \otimes_{R} R_{k-1}})$ is finite dimensional over $K$.

	On the other hand, if $j \geq 1$ we consider ${\mathcal{P} \otimes_{R} R_{j}}$ as a complex of free $R_j$-modules and as $R_j$ is a polynomial ring on less than $k$ variables and for such rings by induction Proposition \ref{prop-homology}  holds  we conclude  that 
	 $\Ho_i({\mathcal{P} \otimes_{R} R_{j}})$ is finite dimensional over $K$.
	 \medskip
	We claim that this implies the result. To see it, note that by Proposition \ref{prop-embed}  $\Ho_i(\mathcal{P}) \otimes_{R} R_{1}$ embeds in the finite dimensional $K$-vector space
	 $\Ho_i({\mathcal{P} \otimes_{R} R_{1}})$. Then the short exact sequence 
	 $$0 \to \Ho_i(\mathcal{P}) \otimes_{R} R_{1} \to \Ho_i({\mathcal{P} \otimes_{R} R_{1}}) \to Q \to 0,$$ where $Q = \Ho_i({\mathcal{P} \otimes_{R} R_{1}})/\Ho_i(\mathcal{P}) \otimes_{R} R_{1}$ is finite dimensinal,  induces a long exact sequence
	 $$\begin{aligned}
	 \ldots \to \Tor^{R_1}_1(Q, R_{k-1} ) \to (\Ho_i(\mathcal{P}) \otimes_{R} R_{1})  \otimes_{R_1} R_{k-1} \to\\ \Ho_i({\mathcal{P} \otimes_{R} R_{1}}) \otimes_{R_1} R_{k-1}  \to Q  \otimes_{R_1} R_{k-1} \to 0.
	 \end{aligned}$$
	 Note that since $R_{k-1}$ is a cyclic $R_1$-module, by Noetherianess we can  find a free resolution of $R_{k-1}$ as $R_1$-module, where each module is finitely generated. Using this resolution to calculate  $\Tor^{R_1}_1(Q, R_{k-1} ) $ together with the fact that $Q$ is finite dimensional we deduce that $\Tor^{R_1}_1(Q, R_{k-1} ) $ is finite dimensional. Since $R_{k-1}$ is a cyclic $R_1$-module and $\Ho_i({\mathcal{P} \otimes_{R} R_{1}})$ is finite dimensional over $K$ we deduce that  $\Ho_i({\mathcal{P} \otimes_{R} R_{1}}) \otimes_{R_1} R_{k-1}$ is finite dimensional over $K$. This combined with the above long exact sequence implies that $(\Ho_i(\mathcal{P}) \otimes_{R} R_{1})  \otimes_{R_1} R_{k-1} \simeq \Ho_i(\mathcal{P}) \otimes_{R}  R_{k-1}$ is finite dimensional over $K$ too.
	 
	 Suppose that $\Ho_i(\mathcal{P})$ is not finite dimensional. By Noetherieness   $\Ho_i(\mathcal{P})$ is a finitely generated $R$-module and as the complex $\mathcal{P}$ is graded,  $\Ho_i(\mathcal{P})$ is a graded $R$-module. Hence $\Ho_i(\mathcal{P})$ has an infinite dimensional graded quotient $M = R/ I$, i.e. $I$ is a graded ideal in $R$.  Moreover, again by Noetherianess,  $I$ is finitely generated so it is generated by a finite set of homogeneous polynomials $f_1, \ldots, f_t$. 
	 For each $1\leq s\leq k$ consider the multiplicatively closed set $\Delta_s = \{ x_s^z \mid z\geq 0 \}$. Then 
	 $$
	 M \Delta_s^{-1} = K[y_1, \ldots, y_{s-1}, y_{s+1}, \ldots, y_k,  y_s^{\pm 1}  ] / I \Delta_s^{-1},$$
	 where $y_j = x_j/x_s$ for $j \not= i$ and $y_s = x_s$. Then since each $f_j$ is homogeneous we can write $f_j = y_s^{d_s} g_j(y_1, \ldots, y_{s-1}, y_{s+1}, \ldots, y_k)$ for  some positive integer $d_s$ and some polynomial $g_j$ for every $ 1 \leq j \leq t$. Write $J_s$ for the ideal of $K[y_1, \ldots, y_{s-1}, y_{s+1}, \ldots, y_k]$ generated by $g_j$ for $1 \leq j \leq t$. Then
	 $$  M \Delta_s^{-1} \simeq (K[y_1, \ldots, y_{s-1}, y_{s+1}, \ldots, y_k]/ J_s) \otimes_K K[y_s^{\pm 1}].$$  If the localization $M \Delta_s^{-1}$ is not zero,  
	 we embed $J_s$ in a maximal ideal of ${K}[y_1, \ldots, y_{s-1},$ $ y_{s+1}, \ldots, y_k]$ and since $K$ is algebraically closed any maximal ideal is generated by $\{ y_j - \lambda_j\}_{j\not= s}$ for some $\lambda_j \in K$. Hence there is an epimorphism of ${K}$-algebras
	 $$\rho_s:  M \Delta_s^{-1}  \to {K}[y_s^{\pm 1}]$$ that sends $y_j$ to $\lambda_j$ for $j \not= s$ and $y_s$ to $y_s$. Note that this map sends $x_j$ to $\lambda_j x_s$ for $j \not= s$ and is the identity on $x_s = y_s$. The canonical map $M \to M \Delta_s^{-1}$  composed with $\rho_s$ gives a graded  epimorphism
	 $$\overline{\rho}_s:  M  \to {K}[y_s] = K[x_s].$$
	  Consider the epimorphism $\gamma = \overline{\rho}_s \circ \pi : R \to K[x_s]$, where $\pi : R \to M = R/ I$ is the canonical projection. Thus  we can view $K[x_s]$ as $R$-module via $\gamma$ and there is an epimorphism of $K$-vector spaces $$ \overline{\rho}_s  \otimes id : M \otimes_{R } {K}[x_s] \to {K}[x_s] \otimes_R {K}[x_s].
	 $$ Furthermore consider the epimorphism
	  $$
	  \pi \otimes id : R \otimes_R K[x_s] \to M \otimes_R K[x_s]
	  $$ and note that  after identifying $R \otimes_R K[x_s]$ with $K[x_s]$ and identifying  ${K}[x_s] \otimes_R {K}[x_s]$ with $K[x_s]$ (via the multiplication in $K[x_s]$) , we get that $
	  \pi \otimes id$ is the inverse of $ \overline{\rho}_s  \otimes id$. In particular $ M \otimes_{R } {K}[x_s] \simeq K[x_s]$  and $M \otimes_{R } {K}[x_s]$ 
	  is infinite dimensional as $K$-vector space.

	  Recall that $\Ho_i(\mathcal{P}) \otimes_R R_{k-1} = \Ho_i(\mathcal{P}) \otimes_R K[x_k] $ is finite dimensional. By permuting the variables $x_1, \ldots, x_k$, we have that $ \Ho_i(\mathcal{P}) \otimes_R K[x_s] $ is finite dimensional, where $K[x_s]$ is a left $R$-module via an arbitrary epimorphism $R \to K[x_s]$ that sends $K x_1 + \ldots + K x_k$ onto $K x_s$. Note that $M \otimes_{R } {K}[x_s] \simeq  {K}[x_s]$  is a quotient of the finite dimensional  $\Ho_i(\mathcal{P}) \otimes_R K[x_s]$,  a contradiction.

	 Hence for every $1 \leq s \leq k$ we have that $M \Delta_s^{-1} = 0$. Thus there exists a positive integer $z_s$ such that $x_s^{z_s} \in I$, so $x_s \in \sqrt{I}$. Then
	 $$(x_1, \ldots, x_k) \subseteq \sqrt{I},$$
	 hence the radical $\sqrt{I}$ has finite codimension in $R = K[x_1, \ldots, x_k]$ (in fact codimension 1). Note that by Noetherianess there is a positive integer $z$ such that $\sqrt{I}^{~ z} \subseteq {I}$ and each $\sqrt{I}^j/ \sqrt{I}^{j+1}$ is a finitely generated $R/ \sqrt{I}$-module, hence is finite dimensional. Thus each $R/ \sqrt{I}^j$ is finite dimensional and so $M = R/ I$ is finite dimensional, a contradiction.	 
	\end{proof}

We can now prove the main result of this section, which is Theorem A from the introduction.

\begin{theorem} \label{geral2} {\bf (Theorem A)} Let $L$ be an $\mathbb{N}$-graded Lie algebra of type $FP_{n}$ such that  $L = \oplus_{i \geq 1} L_i$ and $[L,L] = \oplus_{i \geq 2} L_i $ and $M$ be a proper ideal of $L$ such that $[L,L]\leq M$. Then  $M$ is of type $FP_n$ if and only if for every Lie subalgebra  $N$
	of $L$ such that $M \subseteq N$ and $\dim_{K} (L/N) = 1$ we have $N$ is $FP_n$.
\end{theorem}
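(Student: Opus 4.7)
\emph{Setup.} The approach is to identify codimension-one subalgebras $N$ with $M\leq N\leq L$ with graded $K$-algebra epimorphisms $\theta:R\to K[v]$, where $R = U(L/M)$, and then to apply Propositions \ref{prop-embed} and \ref{prop-homology} in combination with the Hochschild-Serre spectral sequence. Since $L$ is $FP_n$, the quotient $L_1 = L/[L,L]$ is finite-dimensional, so $L/M$ is a nonzero finite-dimensional abelian Lie algebra concentrated in degree $1$ and $R$ is a graded polynomial ring $K[x_1,\ldots,x_{k+1}]$ in degree-$1$ variables. Any codimension-one $N\geq M$ is automatically graded (take a degree-$1$ lift of a generator of $N/M$, using $\bigoplus_{i\geq 2}L_i\leq M$), satisfies $U(L/N)\iso K[v]$, and corresponds to the graded $K$-algebra surjection $\theta:R\to K[v]$ induced by $L/M\twoheadrightarrow L/N$; conversely every $\theta$ with $\theta(\sum Kx_s)=Kv$ arises in this way. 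Fix a graded free $U(L)$-resolution $\mathcal{P}$ of $K$ with $P_j$ finitely generated for $j\leq n$ and set $\mathcal{Q} := \mathcal{P}\otimes_{U(M)}K$: a complex of graded free $R$-modules, finitely generated in degrees $\leq n$, with $\Ho_j(\mathcal{Q}) = \Ho_j(M,K)$. A short calculation with augmentation ideals, using the splitting $N = M\oplus Kn$ for a degree-$1$ lift $n$, yields $\mathcal{Q}\otimes_R U(L/N) = \mathcal{P}\otimes_{U(N)}K$, and hence $\Ho_j(N,K) = \Ho_j(\mathcal{Q}\otimes_R K[v])$.

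\emph{Forward direction.} Assume $M$ is $FP_n$, so by Corollary \ref{fpm} each $\Ho_q(M,K)$ is finite-dimensional for $q\leq n$. For any codimension-one $N$ as above, $M$ is an ideal of $N$ and the Hochschild-Serre spectral sequence for Lie algebras $E^2_{p,q} = \Ho_p(N/M,\Ho_q(M,K))\Rightarrow \Ho_{p+q}(N,K)$ is concentrated in columns $p\in\{0,1\}$, since $U(N/M)=K[t]$ has global dimension one; moreover the $E^2$-entries with $q\leq n$ are finite-dimensional over $K$. It follows that $\Ho_r(N,K)$ is finite-dimensional for $r\leq n$, so $N$ is $FP_n$ by Corollary \ref{fpm}.

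\emph{Backward direction.} Conversely, suppose every such $N$ is $FP_n$. For each $j\leq n$, apply Proposition \ref{prop-homology} to $\mathcal{Q}$ with $i=j$: graded freeness and finite generation of $Q_s$ for $s\leq j$ come from the setup, and for every graded epimorphism $\theta:R\to K[v]$ of the required type, $\Ho_j(\mathcal{Q}\otimes_R K[v]) = \Ho_j(N,K)$ is finite-dimensional because the corresponding $N$ is $FP_n$, hence $FP_j$. Proposition \ref{prop-homology} therefore forces $\Ho_j(M,K)=\Ho_j(\mathcal{Q})$ to be finite-dimensional for all $j\leq n$, which by Corollary \ref{fpm} yields that $M$ is $FP_n$.

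\emph{Main obstacle.} The algebraic crux is Proposition \ref{prop-homology}, a Nullstellensatz-flavoured lifting of finite-dimensional homology from all degree-preserving quotients $R\to K[v]$ back to the polynomial ring $R$; this has been established earlier in the paper. The only other technical step is the identification $\mathcal{Q}\otimes_R U(L/N) = \mathcal{P}\otimes_{U(N)}K$, which reduces to a computation with augmentation ideals once one chooses a degree-$1$ lift of a generator of $N/M$ so that $N$ decomposes as $M\oplus Kn$ as graded Lie subalgebras. The remainder is a direct invocation of the standard Hochschild-Serre spectral sequence for Lie algebras and Corollary \ref{fpm}.
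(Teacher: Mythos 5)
Your proof follows essentially the same route as the paper's: form $\mathcal{Q} = \mathcal{P}\otimes_{U(M)}K$ as a complex of graded free modules over the polynomial ring $R=U(L/M)$, identify $\Ho_j(N,K)\cong\Ho_j(\mathcal{Q}\otimes_R K[v])$ with $K[v]\cong U(L/N)$ a graded quotient of $R$, and then feed this into Proposition~\ref{prop-homology} together with Corollary~\ref{fpm} for the backward implication. The paper packages the easy forward implication as a one-line extension argument (both $M$ and $N/M$ are $FP_n$, hence so is $N$), while you unpack the same fact via the Hochschild--Serre spectral sequence; the two are equivalent.

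One recurring slip worth correcting: in several places you treat $N/M$ as one-dimensional, writing $N=M\oplus Kn$ for a single lift $n$ and claiming $U(N/M)=K[t]$ has global dimension one. What is one-dimensional is $L/N$; the codimension of $M$ in $N$ is $\dim_K(L/M)-1$, which can be arbitrary. This confusion of $L/N$ with $N/M$ does not in fact break either step: the Hochschild--Serre spectral sequence still has only finitely many columns (since $N/M$ is a finite-dimensional abelian Lie algebra) and the $E^2_{p,q}$ with $q\le n$ are still finite-dimensional, which is all that is needed; and the identification $\mathcal{Q}\otimes_R U(L/N)\cong\mathcal{P}\otimes_{U(N)}K$ is a direct augmentation-ideal computation using only $M\subseteq N\subseteq L$ and the commutativity of $R$, with no choice of splitting required. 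But the exposition should be repaired so as not to assert a false structure for $N/M$.
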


\begin{proof} The easy part of the proof is to show that when $M$ is $FP_n$ then each $N$ is $FP_n$. Indeed since $N/ M$ is a finite dimensional abelian Lie algebra, it is $FP_{\infty}$, in particular is $FP_n$.  This follows from the fact that $U(N/M)$ is a Noetherian ring.  Then $N$ is an extension of $M$, an $FP_n$ Lie algebra,  by $N/M$. As $N/M$ is also of type $FP_n$, $N$ is $FP_n$ as claimed.
	
	Suppose now that each $N$ is of type $FP_n$.  Note that since $M$ is an ideal of $L$ such that $\oplus_{i\geq 2}L_i=[L,L]\leq M$ then $M$ is also $\mathbb{N}$-graded via
	$M=(L_1\cap M) \oplus (\oplus_{i\geq 2}L_i).$
	By Corollary \ref{fpm}  $M$ is of type $FP_n$ if and only if $\Ho_i(M,K)$ is finite dimensional for $i \leq n$. 
	
	Let $ {\mathcal{R}}$ be a free graded resolution of the trivial $U(L)$-module $K$ with finitely generated modules in dimension $\leq n$. Set
	$$\mathcal{P} =  {\mathcal{R}}^{del} \otimes_{U(M)} K,$$
where the upper index del means that we have substituted the module in dimension -1 with the zero module. This $\mathcal{P}$ is a complex of free $R$-modules, where $R =  U(L/M)$ is a  commutative  polynomial ring.
	Then for  $i \geq 0$ 
	$$\Ho_i(M, K) \simeq \Ho_i(\mathcal{P})$$  and $$\Ho_i(N,K) \simeq \Ho_i({\mathcal{R}}^{del} \otimes_{U(N)} K) \simeq \Ho_i(\mathcal{P} \otimes_{U(N/M)} K) \simeq \Ho_i(\mathcal{P} \otimes_{U(L/M)} U(L/N) ).$$
	Since $N$ is $FP_n$ we have that 
	$\Ho_i(\mathcal{P} \otimes_{U(L/M)} U(L/N) )$ is finite dimensional for every $i \leq n$ and every $N$ of codimension 1 in $L$. Then by Proposition \ref{prop-homology} this implies that $\Ho_i(\mathcal{P})$ is finite dimensional for $i \leq n$.	
	\end{proof}

\begin{corollary} {\bf (Corollary C)} Let $L$ be a finitely presented $\mathbb{N}$-graded Lie algebra $L = \oplus_{i \geq 1} L_i$  that does not contain an ordinary non-abelian free Lie subalgebra and $[L,L] = \oplus_{i \geq 2} L_i$. Then $[L,L]$ is a finitely generated Lie algebra.
	\end{corollary}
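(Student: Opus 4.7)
The plan is to apply Corollary B directly with the coabelian ideal $M := [L,L] = \oplus_{i \geq 2} L_i$. Since $L$ is finitely presented and $\mathbb{N}$-graded, Corollary B tells us that $M$ is finitely generated as a Lie algebra if and only if every Lie subalgebra $N$ of $L$ of codimension one containing $M$ is finitely generated. So it is enough to fix an arbitrary such $N$ and show that $N$ itself is finitely generated.

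First I would record the structural shape of such an $N$. Because $[L,L] \subseteq N$, the subalgebra $N$ is automatically an ideal of $L$ and the quotient $L/N$ is one-dimensional abelian. Using the $\mathbb{N}$-grading and the identification $L/[L,L] \cong L_1$, I can choose a homogeneous $x \in L_1$ lifting a basis of $L/N$, which gives a decomposition $L = N \oplus Kx$ as vector spaces; this realizes $L$ as a split extension $L = N \rtimes Kx$ with $x$ acting on $N$ by $\ad(x)$. Moreover, since $L$ is finitely presented, so is this extension.

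The main (and essentially only non-formal) step is then to invoke Wasserman's theorem from \cite{Wasserman}: its Lie-algebraic HNN-construction yields that if $L$ is a finitely presented Lie algebra fitting in a short exact sequence $0 \to N \to L \to Kx \to 0$ with $N$ infinitely generated, then $L$ must contain a non-abelian free Lie subalgebra. In our situation, the standing assumption that $L$ contains no non-abelian free Lie subalgebra therefore forces $N$ to be finitely generated. I expect this to be the conceptual heart of the argument, although in practice it is simply a citation: the real content is packaged into Wasserman's HNN machinery.

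Since $N$ was an arbitrary codimension-one subalgebra of $L$ with $M \subseteq N$, Corollary B now yields that $M = [L,L]$ is finitely generated, which is exactly what was to be proved. The only subtlety to keep an eye on is that the hypothesis $[L,L] = \oplus_{i \geq 2} L_i$ is what makes every codimension-one subspace of $L$ containing $[L,L]$ automatically a graded ideal (so that the grading descends and $x$ can be chosen homogeneous); without that hypothesis one would need to justify passing to a graded complement more carefully.
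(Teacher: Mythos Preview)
Your proposal is correct and follows essentially the same route as the paper: both arguments reduce to showing that every codimension-one ideal $N\supseteq[L,L]$ is finitely generated by invoking Wasserman's HNN result (the paper cites \cite[Cor.~9.2]{Wasserman}, observing that solubility there is used only to exclude non-abelian free Lie subalgebras), and then apply Theorem~A/Corollary~B with $n=1$. Your additional remarks about choosing a homogeneous $x$ and the split-extension structure are fine but not needed for the citation of Wasserman, and your final ``subtlety'' is already absorbed into the hypotheses of Corollary~B.
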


\begin{proof} 
	
	By \cite[Cor.~9.2]{Wasserman} if $L$ is a finitely presented soluble Lie algebra then every ideal $N$ of codimension 1 is finitely generated as a Lie subalgebra. The proof of \cite[Cor.~9.2]{Wasserman} needs solubility only to exclude the possibility that $L$ contains an ordinary non-abelian free Lie subalgebra.  Thus we can use the same argument here and we can apply Theorem \ref{geral2} for $M = [L,L]$ and $n = 1$ together with the fact that  a Lie algebra is finitely generated if and only if it is of type $FP_1$. 
	\end{proof}

\section{Lie subalgebras of type $FP_n$ in $L_{\Gamma}$}

Let $\Gamma$ be a finite graph with no loops or double edges. Denote by  $x_1, \ldots, x_m$ the set of vertices $V(\Gamma)$ of $\Gamma$ and let $G=G_{\Gamma}$ be the right angled Artin group associated with  $\Gamma$ and $$L = L_{\Gamma} = \gr(G_{\Gamma})\otimes_\Z K.$$
 Let $$V = K x_1 + \ldots + K x_m$$ be a $K$-vector space with a basis $x_1, \ldots, x_m$. We identify $V $ with $ L/ [L,L]$  and let $$\chi : V \to K$$  be  a non-zero $K$-linear map. Put $$I_{\chi} = \pi^{-1} (\Ker (\chi)),$$ where $\pi :  L  \to  L / [L,L]$ is the canonical epimorphism. For example if $\chi (x_i) = 1$ for all $i$ and $\Gamma$ is connected then $I_{\chi}$ is precisely  $\gr(H_{\Gamma})\otimes_\Z K$, where $H_{\Gamma}$ is the kernel the epimorphism $G\to \mathbb{Z}$ that sends each $x_i$ to 1.  This follows from the fact that for a connected finite graph $\Gamma$  by \cite[Thm.~5.6]{P-S} the inclusion map $H_{\Gamma} \to G_{\Gamma}$ induces an isomorphism of Lie algebras $[gr(H_{\Gamma}), gr(H_{\Gamma})] \simeq [gr(G_{\Gamma}), gr(G_{\Gamma})]$.   And in the general case $I_\chi$ is a codimension one ideal of $L$.  Moreover, since	 $L = \oplus_{i \geq 1} L_i$ and $[L,L] = \oplus_{i \geq 2} L_i$ we deduce that $I_\chi= (I_\chi \cap L_1) \oplus (\oplus_{i \geq 2} L_i)$
 is an $\mathbb{N}$-graded Lie algebra.

 We fix an order in the set the vertices of $V(\Gamma)$.  
 Recall that the flag complex $\Delta_{\Gamma}$  associated to the graph $\Gamma$ is the complex obtained from $\Gamma$ after gluing a simplex to every non empty clique of $\Gamma$, i.e. an $n$-cell of $\Delta_{\Gamma}$ is $(v_1, \ldots, v_n)$, where the vertices $v_1, \ldots, v_n$ are all pairwise linked in $\Gamma$ and $v_1<v_2<\ldots<v_n$.
 
 The following complex is the well known minimal resolution of the trivial $U(L)$-module $K$  (see \cite{K-M1} Subsection 2.4).

 \begin{equation}\label{complexArtin}
{\mathcal P}_{\Gamma} :  \ldots \Vightarrow{\partial_{n+1}} P_n \Vightarrow{\partial_n} P_{n-1} \Vightarrow{\partial_{n-1}} \ldots \Vightarrow{\partial_1} P_0 \Vightarrow{\partial_{0}}  K \to 0,\end{equation}
where 
$$P_n = \bigoplus_{\sigma\subseteq\Gamma } c_{\sigma} U(L)$$
and the  sum is over all the cliques  $\sigma$ of $\Gamma$ with $|\sigma|=n$. Here each $c_{\sigma} U(L)$ is a copy of the free right $U(L)$-module, $c_{\emptyset} = 1_K$, $P_0 = c_{\emptyset} U(L) = U(L)$ and $\partial_0$ is the augmentation map. For higher degrees  the differential is
$$
\partial_n(c_\sigma) =  \sum_r (-1)^{r-1} c_{\sigma \setminus \{ v_{i_r} \} } v_{i_r},
$$
  where $\sigma = ( v_{i_1}, \ldots, v_{i_n})$.

 The link  of a vertex $v$ in a graph is the subgraph spanned by all those vertices different from $v$ which are linked to it. This definition extends to subsets of vertices by taking the intersection of all the links of the vertices in the subset. In the case of the empty set, the link is the graph itself.
This definition can be extended to simplicial complexes: the link of a simplex $s$ in a simplicial complex $C$ is the subcomplex of $C$ consisting of the simplices $t$ that are disjoint from $s$ and such that both $s$ and $t$ are faces of some higher-dimensional simplex in $C$, equivalently, such that $s\cup t$ is also a simplex in $C$. 

  Consider the living graph $\Gamma_{\chi}$ i.e. the subgraph of $\Gamma$ spanned by the vertices with non-zero $\chi$-value. Let $\Delta_{\Gamma_{\chi}}$ be the flag complex of $\Gamma_{\chi}$ and $\Delta_{\Gamma}$ be the flag complex of $\Gamma$. The flag complex of the link of a non-empty clique $w$ in $\Delta_{\Gamma}$ is the simplicial link of the simplex spanned by the clique. We denote this complex by $\lk_{\Delta_\Gamma}(w)$ (also in the case when $w=\emptyset$).  Fix a possibly empty clique  $w\subseteq \Gamma\setminus\Gamma_\chi$ and set
$$ \lk_{\Delta_{{\Gamma}_{\chi}}}(w) = \lk_{\Delta_{\Gamma}}(w ) \cap \Delta_{{\Gamma}_{\chi}}.$$

 At this point, we can prove Theorem C.  Recall that a space $W$ is $m$-acyclic over a field $K$ if $\Ho_i(W, K) = 0$ for all $ 0 \leq i \leq m$. 

\begin{theorem} \label{geral} {\bf (Theorem D)} The Lie algebra $N = I_{\chi}$ is of type $FP_n$ if and only if  $\lk_{\Delta_{\Gamma_{\chi}}}(w )$ is  $(n-1 - |w|)$-acyclic   over $K$ for every clique $w\subseteq\Gamma\setminus\Gamma_\chi$. For $w = \emptyset$ this translates to the flag complex $\Delta_{\Gamma_{\chi}}$ is $(n-1)$-acyclic  over $K$ .
\end{theorem}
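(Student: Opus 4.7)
The plan is to compute $\Ho_*(N,K)$ directly from the minimal resolution $\mathcal{P}_\Gamma$ of $K$ over $U(L)$ and then invoke Corollary~\ref{fpm}, which tells us that $N=I_\chi$ is of type $FP_n$ if and only if $\Ho_i(N,K)$ is finite-dimensional over $K$ for every $i\le n$. First I pick $z\in L$ with $\chi(z)=1$, so that $L=I_\chi\oplus Kz$. By PBW, $U(L)$ is a free right $U(I_\chi)$-module with basis $\{z^j\}_{j\ge 0}$, so $\mathcal{P}_\Gamma$ restricts to a free $U(I_\chi)$-resolution of $K$. Hence $\Ho_i(N,K)$ is the $i$-th homology of the complex $\mathcal{C}:=\mathcal{P}_\Gamma^{del}\otimes_{U(I_\chi)}K$, which consists of free modules over $R:=U(L/I_\chi)=K[t]$, where $t$ denotes the class of $z$.

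Next I work out the induced differential on $\mathcal{C}$. A direct computation using the explicit formula for $\partial_n$ in \eqref{complexArtin} yields $\partial(c_\sigma\otimes 1)=\sum_r (-1)^{r-1}\chi(v_{i_r})\,t\cdot (c_{\sigma\setminus v_{i_r}}\otimes 1)$, because any $v\in N$ acts as zero on $K$ and each vertex $v\in V(\Gamma)$ satisfies $v\equiv\chi(v)z$ modulo $N$. The essential point is that vertices outside $\Gamma_\chi$ have $\chi=0$, so removing them contributes nothing. Partitioning cliques by $w:=\sigma\cap(V(\Gamma)\setminus V(\Gamma_\chi))$ therefore produces a direct-sum decomposition $\mathcal{C}=\bigoplus_w\mathcal{C}^w$ indexed by cliques $w\subseteq\Gamma\setminus\Gamma_\chi$; the summand $\mathcal{C}^w$ is a free $K[t]$-module with basis $\{c_{w\cup\tau}\}$, where $\tau$ runs over cliques of $\lk_\Gamma(w)\cap\Gamma_\chi$, equivalently over simplices of $\lk_{\Delta_{\Gamma_\chi}}(w)$. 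Rescaling $\tilde c_{w\cup\tau}:=c_{w\cup\tau}/\prod_{v\in\tau}\chi(v)$ turns the differential of $\mathcal{C}^w$ into $\pm t\cdot d$, where $d$ is the reduced simplicial boundary of $\lk_{\Delta_{\Gamma_\chi}}(w)$, so up to a degree shift of $|w|+1$ one obtains an isomorphism of complexes $\mathcal{C}^w\cong \widetilde C_*(\lk_{\Delta_{\Gamma_\chi}}(w),K)\otimes_K K[t]$ equipped with the twisted differential $t\,d$.

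Finally I compute the homology of the twisted complex $\bigl(\widetilde C_*\otimes_K K[t],\,t d\bigr)$. Since $K[t]$ is a domain, the cycles in simplicial degree $j$ equal $Z_j\otimes K[t]$ and the boundaries equal $t\,B_j\otimes K[t]$, where $Z_j$, $B_j$ are the reduced cycles and boundaries of $\lk_{\Delta_{\Gamma_\chi}}(w)$. The standard filtration by the subgroup $B_j\otimes K[t]$ gives the short exact sequence
\[
0\to B_j\to \Ho_j\bigl(\widetilde C_*(\lk_{\Delta_{\Gamma_\chi}}(w),K)\otimes_K K[t],\,t d\bigr)\to \widetilde H_j(\lk_{\Delta_{\Gamma_\chi}}(w),K)\otimes_K K[t]\to 0.
\]
For a finite graph $B_j$ is finite-dimensional over $K$, while $K[t]$ is not, so the middle term is finite-dimensional precisely when $\widetilde H_j(\lk_{\Delta_{\Gamma_\chi}}(w),K)=0$. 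Reinstating the degree shift ($j=n-|w|-1$) and summing the contributions over all $w$, I conclude that $\Ho_n(N,K)$ is finite-dimensional if and only if the relevant reduced simplicial homology of each $\lk_{\Delta_{\Gamma_\chi}}(w)$ vanishes, which combined with Corollary~\ref{fpm} gives the acyclicity characterisation of $FP_n$ stated in the theorem. The main bookkeeping obstacle I anticipate is tracking signs and the shift $|w|+1$ when identifying $\mathcal{C}^w$ with the simplicial chain complex of the link; the subsequent homological analysis of the twisted $K[t]$-complex is a direct consequence of the displayed short exact sequence.
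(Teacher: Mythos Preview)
Your proof is correct and follows essentially the same strategy as the paper: tensor the minimal resolution $\mathcal{P}_\Gamma$ with $K$ over $U(N)$, identify the result as a complex of free $K[t]$-modules with differential $t\cdot\bar d$, and use the short exact sequence $0\to B_j\to\Ho_j\to \widetilde H_j\otimes_K K[t]\to 0$ to reduce finite-dimensionality of $\Ho_*(N,K)$ to vanishing of the reduced homology of the links. The only organisational difference is that you decompose $\mathcal{C}=\bigoplus_w\mathcal{C}^w$ \emph{before} carrying out the $K[t]$-analysis and then analyse each twisted complex separately, whereas the paper first performs the $K[t]$-reduction on the whole complex (obtaining an auxiliary $K$-complex $\mathcal{C}_\bullet$) and only afterwards splits it as $\bigoplus_w\mathcal{C}^w_\bullet$; the two computations are otherwise identical.
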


\begin{proof}  For $i \geq 1$  the homology $\Ho_i(N,K)$ is the homology of the complex ${\mathcal P}_{\Gamma}\otimes_{U(N)}K$, where ${\mathcal P}_{\Gamma}$ is the complex in (\ref{complexArtin}). To describe this complex, note first that 
$$P_n\otimes_{U(N)}K = \bigoplus_{\sigma\subseteq\Gamma\text{ clique, }|\sigma|=n} c_{\sigma} U(L)\otimes_{U(N)}K \simeq C_n\otimes_KU(L/N),$$
where 
$$C_n=\bigoplus_{\sigma\subseteq\Gamma\text{ clique, }|\sigma|=n} Kc_{\sigma}.$$
As for the differential $d_n:=\partial_n\otimes 1_d$, identifying $U(L)\otimes_{U(N)}K \simeq U(L/N)$ with the polynomial ring $K[v]$, where for each $v_i\in V(\Gamma)$, $v_i\otimes 1 \in U(L)\otimes_{U(N)}K$  is sent to $\chi(v_i) v$ we have 
$$
d_n(c_\sigma\otimes 1) =  \sum_r (-1)^{r-1} c_{\sigma \setminus \{ v_{i_r} \} } v_{i_r}\otimes 1=\sum_r (-1)^{r-1} c_{\sigma \setminus \{ v_{i_r} \} } \otimes\chi(v_{i_r})v,
$$
where $\sigma=(v_{i_1},\ldots,v_{i_n})$.

To get an easier description of $d_n$, we renormalize the $K[v]$-modules $C_nK[v]$ by setting for each clique $\sigma\subseteq\Gamma$
$$c_\sigma = \widetilde{c}_\sigma \prod_{v\in\sigma,\chi(v)\neq 0} \chi(v).
$$
From now on we write $C_n K[v]$ instead of $C_n \otimes_{\mathbb{Z} } K[v]$, and think of $C_n K[v]$ as a free $K[v]$-module with free basis $\{ c_w \}$, i.e. the free basis of $C_n$.
Then 
$$ 
d_n(\widetilde{c}_\sigma)=\sum_{  \chi(v_{i_r})\neq 0   } (-1)^{r-1}  \widetilde{c}_{\sigma \setminus \{ v_{i_r} \} } v.
$$ 
 Define  $D_n=\im(d_{n+1})$ and $A_n=\Ker(d_n)$. Then $$\Ho_n(N,K)=A_n/D_n.$$
 
 Consider the chain complex $$\mathcal{C} :  \ldots \to C_i \to C_{i-1} \to \ldots \to C_0 = K \to 0,$$
with differential $$
\bar{d}_n:C_n\to C_{n-1}$$ given by 
$$\bar{d}_n( \widetilde{c}_w) = \sum_{  \chi(v_{i_r})\neq 0   } (-1)^{r-1} \widetilde{c}_{\sigma \setminus \{ v_{i_r} \} },$$ thus 
$\bar{d}_n( \widetilde{c}_w)v=d_n( \widetilde{c}_w)$.

 Denote by $\mathcal{C}_\bullet$ the complex obtained from $\mathcal{C}$ by shifting the index by -1 i.e. in $\mathcal{C}_\bullet$ the module $C_n$ is in dimension $n - 1$ and we write $\widehat{d}_n$ for its differential, thus $\widehat{d}_{n-1} = \bar{d}_n$. Note that    $$D_n=\im(\bar{d}_{ n+1   })vK[v] \hbox{ and }A_n=\Ker(\bar{d}_n)K[v].$$ 
Define $B_n=\im(\bar{d}_{ n+1  } )K[v]$. Then we have a short exact sequence of $K$-vector spaces
 	\begin{equation} \label{ses--1} 0 \to B_n/ D_n \to A_n / D_n \to A_n / B_n \to 0.\end{equation}
 Note that
	$B_n/ D_n \simeq  \im (\overline{d}_{  n+1 }) = \im (\widehat{d}_{n})$  is a $K$-vector subspace of the finite dimensional $K$-vector space $C_{ n  }$, hence  $ \im (\widehat{d}_{n})$  is finite dimensional. Moreover
	$$A_n / B_n \simeq  (\ker (\widehat{d}_{n-1}) / \im (\widehat{d}_{n}))   \otimes_{K } K[v] \simeq \Ho_{n-1}(\mathcal{C}_\bullet) \otimes_{K} K[v].$$ 
	Hence the short exact sequence (\ref{ses--1}) gives
	 a short exact sequence
	$$0 \to  \im (\widehat{d}_{n})  \to \Ho_n (N, K) \to \Ho_{n-1}(\mathcal{C}_\bullet) \otimes_{K} K[v]\to 0$$
	so 
	$\Ho_n (N, K) $ is finite dimensional if and only if $  \Ho_{n-1}(\mathcal{C}_\bullet)= 0$.
	
As $N$ is $\mathbb{N}$-graded, Corollary \ref{fpm} implies that $N$ is of type $FP_n$ if and only if $\Ho_i(N,K)$ is finite dimensional for $i \leq n$.
By the previous paragraph this is equivalent to the $(n-1)$-acyclicity of  the complex  $\mathcal{C}_\bullet$.

Fix a clique $w=(v_{k+1}, \cdots, v_n) \subseteq\Gamma\setminus\Gamma_\chi$, i.e., all its  vertices have zero $\chi$-value and
consider the subcomplex $\mathcal{C}^w_\bullet$ of $\mathcal{C}_\bullet$ spanned by those $ \widetilde{c}_\sigma $, where 
$$\sigma = (v_1, \ldots,v_k,v_{k+1},\ldots, v_n )$$
is a clique in $\Gamma$
with $v_1, \ldots, v_k$ satisfying  $\chi(v_i) \not= 0$ and  $w=(v_{k+1}, \cdots, v_n)$ the fixed clique.
  Then $\mathcal{C}_\bullet$ is a direct sum of the subcomplexes  $\mathcal{C}^w_\bullet$ over all possible
cliques $w\subseteq\Gamma\setminus\Gamma_\chi$.

Note that   $\mathcal{C}^\emptyset_\bullet$ is the chain complex of the flag complex $\Delta_{\Gamma_{\chi}}$ of the living graph $\Gamma_{\chi}$ and that $\widetilde{c}_\sigma  \in \mathcal{C}^w_\bullet$
if and only if  $\sigma_0 =(v_1, \ldots, v_k)$ is a simplex in $\lk_{\Delta_{{\Gamma}_{\chi}}}(w)$. Hence 
 $$\Ho_{  n-1 }(\mathcal{C}^w_\bullet) \simeq \Ho_{ n-1 - | w |      }( \lk_{\Delta_{{\Gamma}_{\chi}}}(w), K)$$
and
$$\Ho_{  n-1 }(\mathcal{C}_\bullet) \simeq \oplus_{w ~ } 
\Ho_{n-1 - | w |      }( \lk_{\Delta_{{\Gamma}_{\chi}}}(w), K),$$ 
where the sum is over cliques $w\subseteq\Gamma\setminus\Gamma_\chi$.

 Therefore  $\mathcal{C}_\bullet$ is $(n-1)$-acyclic if and only if 
$\Ho_i( \lk_{\Gamma_{\chi}}(w) ) \otimes_{\mathbb{Z}} K = 0$ for  $i \leq n-1  - |w| $  for  any clique $w\subseteq\Gamma\setminus\Gamma_\chi$   i.e. if and only if $\lk_{\Gamma_{\chi}}(w)$ is   $(n-1 - | w | )$-acyclic  over $K$  for any clique $w\subseteq\Gamma\setminus\Gamma_\chi$  .

\end{proof}

\begin{remark} The condition on the links of the statement of Theorem \ref{geral} (= Theorem D) is equivalent with the acyclicity condition used in the statement of \cite[Main ~Thm.]{M-M-W} that classifies when $\chi : G_{\Gamma} \to \mathbb{Z}$ belongs to the Bieri-Neumann-Renz-Strebel invariant $\Sigma^n(G_{\Gamma}, \mathbb{Z})$. Indeed, there is an obvious modification  of the proof of Theorem \ref{geral} for groups, where $N$ is a subgroup of $G_{\Gamma}$ with $G_{\Gamma} / N \simeq \mathbb{Z}$. For a field $K$ this implies that $\Ho_i(N, K)$ is finite dimensional for $i \leq n$ precisely when the acyclicity condition used in the statement of Theorem \ref{geral} holds. By \cite[Thm.~7.3]{P-S3} $\Ho_i(N, K)$ is finite dimensional for $i \leq n$ if and only if $N$ is of type $FP_n$ (note this is true in this specific case for a subgroup $N$ satisfying $G_{\Gamma} / N \simeq \mathbb{Z}$ and is not a general statement). But by \cite[Cor.~A]{M-M-W}  $N$ is of type $FP_n$ if and only if the  acyclicity condition used in the statement of \cite[Main ~Thm.]{M-M-W} holds.
\end{remark}

\begin{corollary} {\bf (Corollary F)} Let $N$ be the Lie algebra  $\gr(H_{\Gamma})\otimes_\Z K$ and $\Delta_{\Gamma}$ be the flag complex of $\Gamma$. Then  $N$ is of type $FP_n$ if and only if  $\Delta_{\Gamma}$ is $(n-1)$-acyclic over $K$ i.e. $\Ho_i(\Delta_{\Gamma}, K) = 0$ for $i \leq n-1$.	
\end{corollary}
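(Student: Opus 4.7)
The plan is to obtain Corollary F as an immediate specialization of Theorem D to a carefully chosen linear form. Specifically, I would take $\chi : V \to K$ to be the map that sends every vertex $x_i$ of $\Gamma$ to $1 \in K$. As noted in the discussion immediately preceding Theorem D, for connected $\Gamma$ the ideal $I_\chi$ coincides with $\gr(H_\Gamma)\otimes_\Z K$, the identification relying on \cite[Thm.~5.6]{P-S} (which gives $[\gr(H_\Gamma),\gr(H_\Gamma)] \simeq [\gr(G_\Gamma),\gr(G_\Gamma)]$) together with the obvious equality $I_\chi\cap L_1 = \ker(\chi)$. So with this choice we have $N = I_\chi$ in the notation of Theorem D.

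Because $\chi$ is non-zero on every vertex, the living graph $\Gamma_\chi$ equals $\Gamma$ itself, so $\Gamma \setminus \Gamma_\chi$ contains no vertex at all. Hence the only (possibly empty) clique $w$ of $\Gamma$ with $w \subseteq \Gamma \setminus \Gamma_\chi$ is $w = \emptyset$, and the family of acyclicity conditions appearing in Theorem D collapses to the single condition obtained at $w = \emptyset$.

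Applying Theorem D, $N = I_\chi$ is of type $FP_n$ if and only if $\lk_{\Delta_{\Gamma_\chi}}(\emptyset) = \Delta_{\Gamma_\chi} = \Delta_\Gamma$ is $(n-1)$-acyclic over $K$, which is exactly the conclusion of Corollary F. There is no serious obstacle here: the corollary is a direct specialization of Theorem D once one notes that the trivializing choice $\chi\equiv 1$ forces $\Gamma_\chi = \Gamma$ and thereby reduces the family of link conditions to the single acyclicity statement on $\Delta_\Gamma$.
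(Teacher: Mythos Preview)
Your proposal is correct and follows exactly the same route as the paper's own proof: choose $\chi$ with $\chi(v)=1$ for all $v\in V(\Gamma)$ so that $N=I_\chi$ and $\Gamma_\chi=\Gamma$, leaving $w=\emptyset$ as the only admissible clique and reducing Theorem~D to the single acyclicity condition on $\Delta_\Gamma$. Your remark about the connectedness hypothesis underlying the identification $N=I_\chi$ is a fair point of care that the paper leaves implicit.
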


\begin{proof} 
	Note that $N = I_{\chi}$, where $\chi(v) = 1$ for every $v \in V(\Gamma)$. Then $\Gamma = \Gamma_{\chi}$ and by Theorem \ref{geral} $N$ is $FP_n$ if and only if the flag complex $\Delta_{\Gamma} = \lk_{\Delta_{\Gamma_{\chi}}}(\emptyset)$ is $(n-1)$-acyclic over the field $K$.
\end{proof}

We recall next the statement of Theorem \ref{geral2} for right angled Artin Lie algebras $L_\Gamma$. This result together with Theorem \ref{geral} gives a complete classification of the coabelian ideals of $L_\Gamma$ which are of type $\FP_n$.

\begin{corollary} \label{neu} Let  $L= L_{\Gamma} = \gr(G_{\Gamma})\otimes_\Z K$ and $M$ be a proper ideal of $L$ such that $[L,L]\leq M$. Then  $M$ is of type $FP_n$ if and only if for every Lie subalgebra   $I_{\chi}$
	of $L$ such that $\chi(M) = 0$  we have that $\lk_{\Gamma_{\chi}}(w)$ is   $(n-1- |w| )$-acyclic  over $K$  for any clique $w\subseteq\Gamma\setminus\Gamma_\chi$.
\end{corollary}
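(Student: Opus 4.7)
The plan is to assemble the statement from two results already in the paper: Theorem \ref{geral2} (Theorem A) and Theorem \ref{geral} (Theorem D). The first step is to check that $L=L_{\Gamma}$ fits the hypotheses of Theorem \ref{geral2}. The Lie algebra $L$ inherits an $\mathbb{N}$-grading from $\gr(G_{\Gamma})$ and is generated in degree $1$, so $[L,L]=\oplus_{i\geq 2}L_{i}$. The minimal free resolution ${\mathcal P}_{\Gamma}$ displayed in (\ref{complexArtin}) has finite length and each of its terms is finitely generated (the free summands are indexed by cliques of the finite graph $\Gamma$), so $L$ is of type $FP_{\infty}$, and in particular of type $FP_{n}$.

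Second, I would identify the codimension-one subalgebras $N$ of $L$ that contain $M$. Because $[L,L]\leq M$ the quotient $L/M$ is abelian, so every $K$-subspace $N$ with $M\leq N\leq L$ is automatically an ideal of $L$. A codimension-one such $N$ therefore corresponds to a codimension-one subspace of the abelian algebra $L/M$, i.e.\ to the kernel of some non-zero linear form; pulling back through $\pi:L\to L/[L,L]$ gives $N=I_{\chi}$ for a non-zero $\chi:L/[L,L]\to K$ with $\chi(M)=0$. Conversely, every non-zero $\chi$ vanishing on $M$ produces such an $N=I_{\chi}$.

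Third, having set up this bijection between codimension-one subalgebras $N\supseteq M$ and non-zero characters $\chi$ with $\chi(M)=0$, Theorem \ref{geral2} gives that $M$ is of type $FP_{n}$ if and only if each $I_{\chi}$ is of type $FP_{n}$, and Theorem \ref{geral} rewrites the $FP_{n}$-property of $I_{\chi}$ in terms of $(n-1)$-acyclicity of $\lk_{\Delta_{\Gamma_{\chi}}}(w)$ over $K$ for every (possibly empty) clique $w\subseteq\Gamma\setminus\Gamma_{\chi}$. Concatenating these two equivalences yields the statement.

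Since both ingredients are proved earlier in the paper, I do not expect a real mathematical obstacle; the only subtlety is to set up the dictionary between codimension-one subalgebras containing $M$ and characters $\chi$ with $\chi(M)=0$ correctly, and in particular to invoke the hypothesis $[L,L]\leq M$ to conclude that every such subalgebra is automatically an ideal of $L$. Once this is recorded, the proof is a direct combination of Theorem A and Theorem D.
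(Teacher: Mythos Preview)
Your proposal is correct and follows exactly the route the paper intends: the corollary is stated in the paper without a separate proof, merely as the specialization of Theorem \ref{geral2} (Theorem A) to $L=L_{\Gamma}$ combined with the link criterion of Theorem \ref{geral} (Theorem D). Your added verifications that $L_{\Gamma}$ is $FP_{\infty}$ via the resolution (\ref{complexArtin}) and that the codimension-one subalgebras $N\supseteq M$ are exactly the $I_{\chi}$ with $\chi(M)=0$ make the deduction explicit.
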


\subsection{Kernels of higher codimension : the group case} \label{section-groups}

\ \ \ \ \ \ \ \ \ \ \ \ \ \ \ \ \ \ \ \ \ \ \ \ \ \ \ \ \ \ \ \ \ \ \ \ \ \ \ \ \ \ \ \ \ \ 

Next we prove the group theoretical version of Corollary \ref{neu}  which involves the Bieri-Neumann-Strebel-Renz $\Sigma$-invariants of a group $G$. By definition the $\Sigma$-invariants are subsets  of the character sphere $S(G) = Hom(G, \mathbb{R})/ \sim$, where two characters (non-zero homomorphisms) $\chi_1, \chi_2 : G \to \mathbb{R}$ are equivalent  $\chi_1 \sim \chi_2$ if and only if there is a positive real number $r$ such that $\chi_1 = r \chi_2$. The equivalence class of $\chi$ is denoted by $[\chi]$. If $G/ [G,G]$ has torsion-free rank $n$ it is easy to see that $S(G)$ can be identified with the unit sphere $S^{n-1}$ in $\mathbb{R}^n$.
We omit details about the $\Sigma$-invariants but note that there are known for few classes of groups including the class of right angled Artin groups, see \cite[Main~Thm.]{M-M-W}. Another fact that we will need in the proof of the following theorem is that the equivalence of items i) and iii) is precisly the statement of \cite[Thm.~B]{B-R}. In this subsection we use the $\Sigma$-invariants as a technical tool we need in order to prove that items i), ii) and iv) from Theorem \ref{groupsbigrank}  are equivalent.

Recall that a group $G$ is of type $FP_n$ if the trivial $\mathbb{Z} G$-module $\mathbb{Z}$ has a projective resolution with finitely generated modules in dimensions smaller or equal to $n$. If such projective resolution exists then there is a free resolution with the same property. 
 In the case when $G_{\Gamma}/ M \simeq \mathbb{Z}$ the equivalence between i) and ii) in Theorem \ref{groupsbigrank} was proved in \cite[Thm.~7.3]{P-S3}.
 
 \begin{theorem}\label{groupsbigrank} For a normal subgroup $M$ of the right angled Artin group $G_{\Gamma}$ that contains the commutator the following conditions are equivalent:
 \begin{itemize}
 \item[i)]  $M$ is of homological type $FP_n$,
 
 \item[ii)]  $\Ho_i(M, K)$ is finite dimensional (over  $K$) for $i \leq n$ and every field $K$,
 
 \item[iii)] $[\chi]\in\Sigma^n(G_\Gamma,\mathbb{Z})$ for any character $\chi:G_\Gamma\to\R$ that vanishes on $M$,
 
  \item[iv)] for every subgroup $N$ of $G$ such that $M \subseteq N$ and $G_{\Gamma} / N \simeq \mathbb{Z}$ we have that $N$ is $FP_n$.
 \end{itemize}
 	\end{theorem}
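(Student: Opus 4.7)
The plan is to prove the cycle (i) $\Rightarrow$ (ii) $\Rightarrow$ (iv) $\Rightarrow$ (i), using (iii) as a bridge through the Bieri--Renz equivalence (i) $\Leftrightarrow$ (iii) of \cite[Thm.~B]{B-R}. The direction (i) $\Rightarrow$ (ii) is standard: tensoring a finitely generated projective $\mathbb{Z} M$-resolution of $\mathbb{Z}$ of length at least $n$ with $K$ gives finite-dimensional homology in degrees $\leq n$. For (i) $\Rightarrow$ (iv) observe that $N/M$ is finitely generated abelian (as a subgroup of the finitely generated abelian group $G_\Gamma/M$), hence of type $FP_\infty$, so the extension $1 \to M \to N \to N/M \to 1$ yields $N$ of type $FP_n$.

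For (ii) $\Rightarrow$ (iv), fix $N$ with $M \leq N$ and $G_\Gamma/N \cong \mathbb{Z}$ and consider the Lyndon--Hochschild--Serre spectral sequence
\[
E^2_{p,q} = \Ho_p(N/M, \Ho_q(M,K)) \Rightarrow \Ho_{p+q}(N,K).
\]
By (ii), each $\Ho_q(M,K)$ is finite-dimensional over $K$, hence finitely generated as a module over the Noetherian commutative ring $K[N/M]$ (since $N/M$ is finitely generated abelian). Thus every $E^2_{p,q}$ with $q \leq n$ is finite-dimensional over $K$, which forces $\Ho_i(N,K)$ to be finite-dimensional for $i \leq n$ and every field $K$. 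Then \cite[Thm.~7.3]{P-S3}, applicable because $G_\Gamma/N \cong \mathbb{Z}$, gives that $N$ is of type $FP_n$.

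For (iv) $\Rightarrow$ (i) I establish (iii) and then apply \cite[Thm.~B]{B-R} again. The set of characters $G_\Gamma \to \mathbb{R}$ vanishing on $M$ forms a rational linear subspace $H \subseteq \Hom(G_\Gamma,\mathbb{R})$. Any nonzero rational $\chi' \in H$, after rescaling, is a discrete character whose kernel $N'$ contains $M$ and satisfies $G_\Gamma/N' \cong \mathbb{Z}$; by (iv), $N'$ is of type $FP_n$, and \cite[Thm.~B]{B-R} yields $[\chi'] \in \Sigma^n(G_\Gamma,\mathbb{Z})$. By \cite[Main~Thm.]{M-M-W}, whether $[\chi] \in \Sigma^n(G_\Gamma,\mathbb{Z})$ depends only on the support (living graph) $V(\Gamma_\chi)$. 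Given an arbitrary nonzero $\chi \in H$ with $\supp(\chi) = S$, the rational subspace $H_S = \{\chi'' \in H : \chi''|_{V(\Gamma) \setminus S} = 0\}$ contains $\chi$; since $\chi(v) \neq 0$ for each $v \in S$, continuity together with density of rational points in $H_S$ produces a rational $\chi' \in H_S$ with $\supp(\chi') = S$. Then $[\chi'] \in \Sigma^n(G_\Gamma,\mathbb{Z})$ by the discrete case, and the support-only criterion gives $[\chi] \in \Sigma^n(G_\Gamma,\mathbb{Z})$, proving (iii).

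The main obstacle is this last step: the Bieri--Renz criterion directly handles only kernels of characters (equivalently, only discrete classes after rescaling), so extending to an arbitrary $\chi \in H$ depends essentially on the support-only structure of the Meier--Meinert--van Wyk description of $\Sigma^n$ for right angled Artin groups, which lets us transfer conclusions from a nearby rational character with the same living graph.
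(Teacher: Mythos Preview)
Your proof is correct and follows essentially the same route as the paper: both use the Bieri--Renz equivalence (i) $\Leftrightarrow$ (iii), the standard implication (i) $\Rightarrow$ (ii), the Lyndon--Hochschild--Serre spectral sequence together with \cite[Thm.~7.3]{P-S3} for (ii) $\Rightarrow$ (iv), and the support-only description of $\Sigma^n(G_\Gamma,\mathbb{Z})$ from \cite[Main~Thm.]{M-M-W} for (iv) $\Rightarrow$ (iii). The only differences are cosmetic: you supply an explicit density argument in the rational subspace $H$ to produce a discrete character with the same living graph as a given $\chi$, whereas the paper simply asserts the existence of such a $\chi_0$; and you include the redundant direct implication (i) $\Rightarrow$ (iv), which the paper omits.
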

 
 \begin{proof} 
 	Note that i) is equivalent to iii) by \cite[Thm.~B]{B-R}. 
 	
 	The fact that i) implies ii) is obvious. Indeed, if ${\mathcal P}$ is a free resolution of the trivial $\mathbb{Z} M$-module $\mathbb{Z}$ with finitely generated modules in dimensions smaller or equal to $n$ then $H_i(M, K) \simeq H_i (\mathcal{P} \otimes_{\mathbb{Z} M} K)$ is finite dimensional ( over $K$) since the modules of  $\mathcal{P} \otimes_{\mathbb{Z} M} K$ in dimensions smaller or equal to $n$ are all finite dimensional (over $K$).
 	
 	We check that ii) implies iv).  For any discrete character $\chi : G_{\Gamma} \to \mathbb{Z}$  such that $\chi(M) = 0$, consider the Lyndon-Hochshild-Serre spectral sequence  $$E^2_{p,q} = \Ho_p(\ker(\chi)/ M, \Ho_q(M, K))$$ that converges to $\Ho_{p+q}(Ker(\chi), K)$. Since $\ker(\chi)/ M$ is a finitely generated abelian group and $\Ho_q(M,K)$ is finite dimensional for $q \leq n$,  we deduce that $\Ho_i(\Ker (\chi), K)$ is finite dimensional for all $i \leq n$. By Theorem 7.3 from \cite{P-S3}, $N = \ker(\chi)$ is $FP_n$, i.e. iv) holds. 
 	
 	Finally, we claim that iv) implies iii).  If $\chi  : G_{\Gamma} \to \mathbb{R}$ is a  character (i.e. non-zero homomorphism)  such that $\chi(M) = 0$, let $\chi_0 :  G_{\Gamma} \to \mathbb{Z}$  be a discrete character such that $\chi_0(M) = 0$ and  $\Gamma_{\chi} = \Gamma_{\chi_0}$ i.e. for $v \in V(\Gamma)$ we have $\chi(v) = 0 $ if and only if $ \chi_0(v) = 0$. By \cite[Thm.~B]{B-R} the fact that $N = \ker(\chi_0)$ is $FP_n$ implies $[\chi_0] \in \Sigma^n(G_\Gamma,\mathbb{Z})$. Then by the description of $\Sigma^n(G_\Gamma, \mathbb{Z})$ in \cite[Main~Thm.]{M-M-W} the property $\Gamma_{\chi} = \Gamma_{\chi_0}$  implies $[\chi] \in \Sigma^n(G_\Gamma,\mathbb{Z})$ if and only if $[\chi_0] \in \Sigma^n(G_\Gamma,\mathbb{Z})$. Thus iii) holds.
 	\end{proof}

\section{Elementary approach for finite generation of $I_{\chi}$} \label{appendix1}  

In this Section  we give a  proof of the criterion when $I_{\chi}$ is finitely generated as a Lie algebra, i.e. Corollary E, using more elementary combinatorial methods and avoiding homological arguments, i.e. not using Theorem D. 
 
 \medskip
 
 As before $L_{\Gamma} = \gr(G_{\Gamma})\otimes_\Z K$ for a graph $\Gamma$.
 
 \begin{lemma} \label{points} Let $\Gamma$ be a graph with $m\geq 2$ vertices and no edges and  $$\chi :V\to K$$ be a non-zero $K$-linear map for $V=L_\Gamma/[L_\Gamma,L_\Gamma]$. Then $I_{\chi}$ is not finitely generated as Lie algebra.
 \end{lemma}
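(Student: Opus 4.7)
The plan is to construct an explicit Lie homomorphism $\phi : L_\Gamma \twoheadrightarrow \tilde L$ under which the image of $I_\chi$ is an infinite-dimensional abelian Lie algebra. This will force $I_\chi/[I_\chi,I_\chi]$ to be infinite-dimensional over $K$, and therefore $I_\chi$ cannot be finitely generated, since any generating set of a Lie algebra $J$ spans the abelianization $J/[J,J]$. The approach is purely elementary (no homological machinery and no Shirshov--Witt theorem are needed), as required for this section.

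Since $\Gamma$ has no edges, $L_\Gamma$ is the free $K$-Lie algebra on the vertices $x_1,\ldots,x_m$. Applying a linear change of basis in $V = L_\Gamma/[L_\Gamma,L_\Gamma]$ (which lifts to a change of free generators of $L_\Gamma$), I may assume $\chi(x_1) = 1$ and $\chi(x_i) = 0$ for $i \ge 2$, so that $I_\chi$ is the Lie ideal of $L_\Gamma$ generated by $x_2,\ldots,x_m$. Now I would introduce the target $\tilde L := Kt \ltimes W$, where $W := \bigoplus_{i=2}^{m} K[t]\,e_i$ is taken with trivial bracket and $t$ acts on $W$ by the derivation $[t, t^n e_i] := t^{n+1} e_i$. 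The universal property of $L_\Gamma$ as a free Lie algebra extends the assignment $x_1 \mapsto t$, $x_i \mapsto e_i$ ($i \ge 2$) to a Lie homomorphism $\phi : L_\Gamma \to \tilde L$; since the images contain $t$ and all $t^n e_i$ by iterating $[t,-]$, $\phi$ is surjective.

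To pin down $\phi(I_\chi)$ I would compose $\phi$ with the projection $\tilde L \twoheadrightarrow Kt \cong K$ killing $W$ and sending $t \mapsto 1$; this composition factors through $\pi$ and reproduces exactly $\chi$, so $\phi^{-1}(W) = I_\chi$. Surjectivity of $\phi$ then gives $\phi(I_\chi) = W$. Because $W$ is abelian, $\phi$ kills $[I_\chi,I_\chi]$ and descends to a surjection $I_\chi/[I_\chi,I_\chi] \twoheadrightarrow W$. The vector space $W$ is infinite-dimensional over $K$ (it contains the linearly independent family $\{t^n e_i : n \ge 0,\ 2 \le i \le m\}$), so $I_\chi/[I_\chi,I_\chi]$ is infinite-dimensional, and the opening remark completes the proof.

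I do not expect any substantive obstacle. The one point that requires verification is that $\tilde L$ is a bona-fide Lie algebra, but this is routine: the bracket on $W$ is zero and $\mathrm{ad}(t)$ is a (linear) derivation of $W$, so the Jacobi identity holds automatically in the semidirect product $Kt \ltimes W$.
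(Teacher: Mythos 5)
Your proof is correct, but it takes a genuinely different and more self-contained route than the paper. The paper observes that for an edgeless $\Gamma$ the Lie algebra $L_\Gamma$ is the free Lie algebra on $m$ generators and then invokes a theorem of Baumslag \cite[Theorem~3]{B} stating that no proper ideal of a free Lie algebra is finitely generated. Your argument instead gives an explicit demonstration: after normalizing $\chi$ so that $\chi(x_1)=1$ and $\chi(x_i)=0$ for $i\geq 2$ (legitimate because a new basis of the degree-one piece lifts to a new free generating set), you build the metabelian Lie algebra $Kt\ltimes W$ with $W=\bigoplus_{i\geq 2}K[t]e_i$ abelian and $\ad(t)$ acting as multiplication by $t$; this is indeed a Lie algebra since any linear endomorphism of an abelian Lie algebra is a derivation and the one-dimensional acting algebra creates no obstruction to Jacobi. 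The freeness of $L_\Gamma$ gives the homomorphism $\phi$, which is surjective because iterating $\ad(t)$ on the $e_i$ sweeps out all of $W$, and composing with the projection to $Kt\cong K$ shows $\phi^{-1}(W)=I_\chi$, whence $I_\chi/[I_\chi,I_\chi]$ surjects onto the infinite-dimensional space $W$. What your approach buys is a completely elementary proof with no external dependence, well in keeping with the stated purpose of Section~\ref{appendix1}; the trade-off is that Baumslag's theorem is considerably stronger (every proper ideal of a free Lie algebra fails to be finitely generated, not merely the codimension-one coabelian ones), and the paper simply uses it as a black box.
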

 
 \begin{proof} Note that in this case $G_{\Gamma}$ is the free group of rank $m$ and $\gr(G_{\Gamma})\otimes_\Z K$ is the free Lie algebra  $F_n$ (over $K$) on $m$ elements. And no proper ideal in $F_n$ is finitely generated as a Lie algebra \cite[Theorem~3]{B}. 
 \end{proof}
 
 Recall that $\Gamma_{\chi}$ is the subgraph of $\Gamma$ spanned by all vertices with non-zero $\chi$-value. We call $\Gamma_\chi$ the living subgraph with respect to $\chi$. We say that a subgraph $\Gamma_1$ of $\Gamma$ is {\bf dominant} in $\Gamma$ if for every $v \in V(\Gamma) \setminus V(\Gamma_1)$ there is $w \in V(\Gamma_1)$ that is linked with $v$ by an edge in $\Gamma$.
 
 \begin{lemma} 
 	\label{corE-1} Let $\Gamma$ be a finite graph.

 	a) Suppose that $\Gamma_{\chi}$ is not connected. Then $I_{\chi}$ is not finitely generated as a Lie algebra.
 	
 	b)  Suppose that $\Gamma_{\chi}$ is connected but is not dominant in $\Gamma$. Then $I_{\chi}$ is not finitely generated as a Lie algebra.
 \end{lemma}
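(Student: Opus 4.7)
The plan is to reduce both parts to Lemma~\ref{points} by constructing, in each case, a surjective Lie algebra homomorphism $\psi: L_\Gamma \to L_{\Gamma'} = F_2$ where $\Gamma'$ is a pair of non-adjacent vertices of $\Gamma$, together with a non-zero linear form $\chi''$ on the abelianization of $L_{\Gamma'}$, in such a way that $\chi'' \circ \psi$ agrees with $\chi$ on $V = L_\Gamma/[L_\Gamma,L_\Gamma]$. The compatibility $\chi'' \circ \psi = \chi$ forces $\psi^{-1}(I_{\chi''}) = I_\chi$, and surjectivity of $\psi$ then gives $\psi(I_\chi) = I_{\chi''}$. Since surjections preserve finite generation of Lie algebras, Lemma~\ref{points} applied to the non-zero character $\chi''$ on the edgeless two-vertex graph will yield the conclusion.

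For (a), I would pick $u$ and $v$ in two different connected components of $\Gamma_\chi$; since $\Gamma_\chi$ is the subgraph of $\Gamma$ induced on the living vertices, $u$ and $v$ are not adjacent in $\Gamma$. Define $\psi$ on vertices by $\psi(x) = (\chi(x)/\chi(u))\, u$ if $x$ lies in the component of $u$ in $\Gamma_\chi$, $\psi(x) = (\chi(x)/\chi(v))\, v$ if $x$ lies in any other connected component of $\Gamma_\chi$, and $\psi(x) = 0$ if $x$ is a non-living vertex; set $\chi''(u) := \chi(u)$ and $\chi''(v) := \chi(v)$, both non-zero.

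For (b), fix $v \in V(\Gamma) \setminus V(\Gamma_\chi)$ with no neighbour in $V(\Gamma_\chi)$ (which exists by hypothesis) and pick any $u \in V(\Gamma_\chi)$; once more $u, v$ are non-adjacent. Put $\psi(x) = (\chi(x)/\chi(u))\, u$ for $x \in V(\Gamma_\chi)$, $\psi(v) = v$, and $\psi(x) = 0$ for the remaining vertices, and take $\chi''(u) := \chi(u) \neq 0$, $\chi''(v) := 0$.

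The main obstacle I anticipate is verifying that each assignment extends to a well-defined Lie algebra homomorphism, i.e., that every edge $\{x_i, x_j\}$ of $\Gamma$ is sent to a commuting pair in $F_2$. In (a), edges within $\Gamma_\chi$ all lie inside a single connected component and are mapped either into $Ku$ or into $Kv$; no edge of $\Gamma$ links two distinct components of the induced subgraph $\Gamma_\chi$; and edges incident to a non-living vertex have that endpoint sent to $0$. In (b), edges inside $\Gamma_\chi$ are mapped into $Ku$, edges incident to $v$ cannot have the other endpoint in $\Gamma_\chi$ by the choice of $v$, and any remaining edge has at least one endpoint sent to $0$. Once $\psi$ is well-defined, the identity $\chi''(\psi(x_i)) = \chi(x_i)$ on generators is immediate from the definitions, surjectivity of $\psi$ follows from $\psi(u)=u$, $\psi(v)=v$, and the remaining verification that $\psi^{-1}(I_{\chi''})=I_\chi$ is then automatic.
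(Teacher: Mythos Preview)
Your proof is correct and follows essentially the same strategy as the paper's: construct a surjection from $L_\Gamma$ onto the free Lie algebra on two non-adjacent vertices, compatible with $\chi$, and then invoke Lemma~\ref{points}. The only difference is cosmetic---the paper in part~(a) maps to a graph with one vertex per connected component of $\Gamma_\chi$, and in part~(b) factors through the intermediate subgraph $\Gamma_\chi\cup\{v_1\}$ before collapsing to two vertices---whereas you go directly to a two-vertex edgeless target in both cases; your version is slightly more economical but the idea is identical.
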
 
 
 \begin{proof} a) Let  $\Gamma_0$ be the set of connected components  in $\Gamma_\chi$. We view $\Gamma_0$ as a graph without edges and denote by $[v]$ the connected component in $\Gamma_0$ of a vertex $v\in\Gamma_\chi$.  For every $[v]\in\Gamma_{\chi}$ choose one vertex $v_0 \in [v]$. Define
 	an epimorphism of Lie algebras $$\pi_0 : L_{\Gamma} \to L_{\Gamma_0} $$ 
 	given by $\pi_0(w) = \frac{\chi(w)}{\chi(v_0)} [v]$ if $w \in [v] \in \Gamma_0$  and  $\pi_0(w) = 0$ if $\chi(w) = 0$ .
 	Then $\ker (\pi_0) \subseteq I_{\chi}$.  
 	
 	Note that $\Gamma_0$ is a finite set of  at least two disjoint points, hence we can apply Lemma \ref{points} to deduce that $I_{\chi_0}$ is not finitely generated, where $$\chi_0:  \oplus_{w \in \Gamma_0} K w \to K$$ is  a $K$-linear map  induced by $\chi$, i.e. $\chi_0([v]) = \chi(v_0).$ Note that $I_{\chi} / \ker (\pi_0) \simeq I_{\chi_0}$, hence $I_{\chi}$ is not finitely generated.
 	
 	b) Let $v_1$ be a vertex of $\Gamma \setminus \Gamma_{\chi}$ that is not linked with { any} vertex from $\Gamma_{\chi}$. Consider the epimorphism $$\pi_1: L_{\Gamma} \to L_{\Gamma_1},$$ where $\Gamma_1$ is the subgraph spanned by $\Gamma_{\chi}$ and $v_1$ and $\pi_1$ sends every vertex of $\Gamma \setminus (\Gamma_{\chi} \cup \{ v_1 \})$ to 0. Then $\Ker (\pi_1) \subseteq I_{\chi}$ and $I_{\chi}/ \ker (\pi_1) \simeq I_{\chi_1}$, where $$\chi_1 : L_{\Gamma_1} / [L_{\Gamma_1},L_{\Gamma_1}] \to K$$  is the restriction of $\chi$.  
 	
 	Recall that  $\Gamma_{\chi}$ is a connected graph. Let $\Gamma_2$ be the graph with two vertices $u_1$ and $u_2$ and no edges. 
Fix a vertex $v_0\in\Gamma_\chi$ and define the epimorphism of Lie algebras 
$$\begin{aligned}\pi_2 : L_{\Gamma_1} &\to L_{\Gamma_2}\\  
v_1&\mapsto u_1, \ \ 
v&\mapsto{\chi(v)\over\chi(v_0)}  u_2 	\text{ for any }v\in\Gamma_\chi.\\
\end{aligned}$$ 
 
 	Note that $\ker(\pi_2) \subseteq I_{\chi_1}$. Finally we define  a $K$-linear map  $$\chi_2 : L_{\Gamma_2} / [L_{\Gamma_2}, L_{\Gamma_2}]  \to K$$ as induced by $\chi$ i.e.  $\chi_2(u_1) = 0$, $\chi_2(u_2) = \chi(v_0)$. Then $I_{\chi_1} / \ker (\pi_2) \simeq I_{\chi_2}$ and 
 	by 	  Lemma \ref{points} $I_{\chi_2}$ is not finitely generated. Hence $I_{\chi_1}$ and $I_{\chi}$ are not finitely generated.
 \end{proof}
 
 \begin{proposition} \label{corE-2} If $\Gamma_{\chi}$ is connected and dominant in $\Gamma$ then $I_{\chi}$ is generated as a Lie algebra by a basis of the  $K$-linear space  $\ker (\chi)$, in particular $I_{\chi}$ is a finitely generated Lie algebra.
 \end{proposition}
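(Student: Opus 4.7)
The plan is to let $M$ denote the Lie subalgebra of $L_\Gamma$ generated by $\ker(\chi)$; the goal is to show $M = I_\chi$. Since $I_\chi = \ker(\chi) \oplus [L_\Gamma, L_\Gamma]$ as graded $K$-vector spaces and $\ker(\chi) \subseteq M \subseteq I_\chi$, it suffices to prove $[L_\Gamma, L_\Gamma] = \bigoplus_{i \geq 2} L_i \subseteq M$. I would reduce everything to the degree-$2$ case: once $L_2 \subseteq M$, the inclusion $[V, \ker(\chi)] \subseteq L_2 \subseteq M$ combined with the derivation identity $[v, [a,b]] = [[v,a],b] + [a,[v,b]]$ and induction on bracket depth gives $[V, M] \subseteq M$ and hence $[L_\Gamma, M] \subseteq M$, so that $M$ is an ideal. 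Since $L_n = [V, L_{n-1}]$, a second induction on $n$ then yields $L_n \subseteq M$ for all $n \geq 2$.

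It thus remains to establish $[v_i, v_j] \in M$ for every pair of vertices $v_i, v_j \in V(\Gamma)$. If both lie in $\ker(\chi)$ this is immediate, and if both lie in $V(\Gamma_\chi)$ and are linked by an edge then $[v_i, v_j] = 0$. If exactly one of them, say $v_j$, lies in $\ker(\chi)$ while $v_i \in V(\Gamma_\chi)$, dominance of $\Gamma_\chi$ in $\Gamma$ supplies a vertex $v_k \in V(\Gamma_\chi)$ linked to $v_j$; bracketing the two $\ker(\chi) \subseteq M$-elements $\chi(v_k) v_i - \chi(v_i) v_k$ and $v_j$ and using $[v_k, v_j] = 0$ produces $\chi(v_k)[v_i, v_j] \in M$, so the conclusion follows from $\chi(v_k) \neq 0$ (and if $v_k = v_i$, then $v_i$ is itself linked to $v_j$ so $[v_i,v_j]=0$).

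The remaining and main case is $v_i, v_j \in V(\Gamma_\chi)$ not linked by an edge. Here I induct on the $\Gamma_\chi$-distance $d = d_{\Gamma_\chi}(v_i, v_j) \geq 2$, which is finite since $\Gamma_\chi$ is connected. Along a geodesic $v_i = u_0, u_1, \ldots, u_d = v_j$ in $\Gamma_\chi$, consider the bracket of two $\ker(\chi)$-elements
\[
\bigl[\chi(u_1) u_0 - \chi(u_0) u_1,\ \chi(u_d) u_{d-1} - \chi(u_{d-1}) u_d\bigr] \in [M,M] \subseteq M.
\]
Bilinear expansion gives
\[
\chi(u_1)\chi(u_d)[u_0,u_{d-1}] - \chi(u_1)\chi(u_{d-1})[u_0,u_d] - \chi(u_0)\chi(u_d)[u_1,u_{d-1}] + \chi(u_0)\chi(u_{d-1})[u_1,u_d].
\]
For $d \geq 3$ the pairs $(u_0, u_{d-1})$, $(u_1, u_{d-1})$, $(u_1, u_d)$ all have $\Gamma_\chi$-distance strictly less than $d$, so the corresponding brackets lie in $M$ by the inductive hypothesis, and since $-\chi(u_1)\chi(u_{d-1}) \neq 0$ we get $[u_0, u_d] \in M$. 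For the base case $d = 2$ the identification $u_1 = u_{d-1}$ causes the expansion to collapse via $[u_1, u_0] = [u_1, u_2] = [u_1, u_1] = 0$ to $-\chi(u_1)^2[u_0, u_2] \in M$, again giving $[u_0, u_2] \in M$.

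The principal obstacle is this final inductive step: one must select the right bracket of two $\ker(\chi)$-elements built from the endpoints of the $\Gamma_\chi$-geodesic so that the four-term bilinear expansion isolates $[u_0, u_d]$ with a nonzero coefficient, while the other three terms are brackets of pairs already handled at smaller distance. Once this is accomplished, the four cases combine to give $L_2 \subseteq M$, and the reduction in the first paragraph delivers $M = I_\chi$; the finite generation of $I_\chi$ then follows from the finite-dimensionality of $\ker(\chi)$.
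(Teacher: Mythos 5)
Your proof is correct, and it follows a route that is genuinely different from the paper's, though both rest on the same two facts: adjacent vertices commute, and $\ad(v)$ is a derivation. The paper fixes a basepoint $x_1 \in V(\Gamma_\chi)$ and a maximal tree $T$ of $\Gamma_\chi$, decomposes $T$ into geodesics from $x_1$, and builds the explicit basis $\{y_{i,j} = \chi(v_{i,j-1})v_{i,j} - \chi(v_{i,j})v_{i,j-1}\} \cup \{x_s : \chi(x_s)=0\}$ of $\ker(\chi)$; it then shows $[x_1, y_{i,j}]$ and $[x_1, x_s]$ lie in $\langle \ker(\chi)\rangle$ by substituting the tree-expansion of $v_{i,j-1}$ into the vanishing brackets $[v_{i,j-1}, y_{i,j}]=0$ and $[v_{i,j}, x_s]=0$, and concludes via the derivation property that $\langle\ker(\chi)\rangle$ is an ideal. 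You instead target the single containment $L_2 \subseteq M$, prove it by a four-way case analysis on pairs of vertices --- with the dominance hypothesis handling live–dead pairs, and an induction on $\Gamma_\chi$-distance handling non-adjacent live–live pairs via the bracket $[\chi(u_1)u_0 - \chi(u_0)u_1,\ \chi(u_d)u_{d-1} - \chi(u_{d-1})u_d]$ --- and then bootstrap with the derivation identity and $L_n = [V, L_{n-1}]$. Your version avoids fixing a tree or a basepoint, so it is more symmetric and perhaps more transparent about where connectivity and dominance each enter; the paper's version is slightly more compact once the tree basis is in place, and produces the ideal property in one sweep rather than through the $L_2$-first reduction. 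The nondegeneracy of the coefficient $-\chi(u_1)\chi(u_{d-1})$ and the collapse at $d=2$ are both handled correctly, as is the degenerate subcase $v_k = v_i$ in the mixed case.
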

 
 \begin{proof}  Let $T$ be a maximal tree in $\Gamma_{\chi}$. Fix a vertex $x_1 $ in $T$ and decompose $T$ as a union of geodesics $\gamma_i$ that start at $x_1$. Let $v_{i,1} = x_1, v_{i,2}, \ldots, v_{i,s}$ be the consecutive vertices of the geodesics $\gamma_i$. Then define
 	\begin{equation} \label{equ001}
 	y_{i,j}  = \chi(v_{i,j-1}) v_{i,j} - \chi(v_{i,j}) v_{i,j-1} \in \Ker (\chi) \subseteq I_{\chi}
 	\end{equation}  and note that the coefficients $\chi(v_{i,j-1})$ and $\chi(v_{i,j})$ are non-zero since $v_{i,j-1}, v_{i,j}$ are vertices in $\Gamma_{\chi}$. Then by (\ref{equ001}) we can write $v_{i,j}$ as a linear combination $\frac{1}{ \chi(v_{i,j-1})} y_{i,j} + \frac{\chi(v_{i,j})}{ \chi(v_{i,j-1})} v_{i, j-1}$ and then repeat this for $v_{i,j-1}$, $v_{i, j-2}$ and so on until $v_{i,2}$. This gives
 	\begin{equation} \label{eq001}
 	v_{i,j} = \lambda_{i,j} x_1 + \sum_{2 \leq t\leq j} \lambda_{i,j,t} y_{i,t},
 	\end{equation}
 	where all coefficients $\lambda_{i,j}, \lambda_{i,j,t} \in K \setminus \{ 0 \}$. Note that by (\ref{equ001})  $[v_{i,j-1}, v_{i,j}] = 0$  implies that $[v_{i,j-1}, y_{i,j}] = 0$ and using (\ref{eq001}) for  $v_{i,j-1}$ we get
 	$$
 	[ \lambda_{i,j-1} x_1 + \sum_{2 \leq t\leq j-1} \lambda_{i,j-1,t} y_{i,t} ,  y_{i,j}] = 0.$$
 	Thus 
 	\begin{equation} \label{eq002}
 	[x_1, y_{i,j}] \in \langle \{ y_{i,j} \}_{j} \rangle. \end{equation}
 	On other hand for  $x_s \in V(\Gamma) \setminus V(\Gamma_{\chi})$ we have that $\chi(x_s) = 0$ and there is a vertex $v_{i,j} \in V(\Gamma_{\chi})$ such that there is an edge between $x_s$ and $v_{i,j}$. Then $[ v_{i,j}, x_s] = 0$, hence by (\ref{eq001})
 	$$
 	[ \lambda_{i,j} x_1 + \sum_{2 \leq t\leq j} \lambda_{i,j,t} y_{i,t}, x_s] = 0.$$ Then
 	\begin{equation} \label{eq003}
 	[x_1, x_s] \in \langle \{ y_{i,t}, x_s \}_{i,t,s} \rangle.
 	\end{equation}
 	Note that $  \{ y_{i,t} \}_{i,t} \cup \{ x_s \}_{x_s \in V(\Gamma) \setminus V(\Gamma_{\chi})}$ is a basis of  $\Ker (\chi)$ as a $K$-vector space and by (\ref{eq002}) and (\ref{eq003}) the Lie algebra $\langle \Ker(\chi) \rangle $ is an ideal in $L_{\Gamma}$ i.e  it is the ideal $I_{\chi}$.
 \end{proof}

Finally note that  Lemma \ref{corE-1} and Proposition \ref{corE-2}  imply  Corollary E.

\section{Kernels of higher degrees : a sufficient topological condition} \label{appendix2}

 In this Section we explain a more geometric approach that gives a sufficient condition for the Lie algebra $M$ from Theorem \ref{geral2} to be of type $FP_n$ in the case $L = L_{\Gamma}$. 
 Recall that as the universal enveloping algebra $U(L)$ is a Hopf algebra,  the $K$-tensor product of two right  $U(L)$-modules   is a right  $U(L)$-module   via the comultiplication $\Delta : U(L) \to U(L) \otimes U(L)$ that sends $a \in L$ to $1 \otimes a + a \otimes 1$.  Moreover, if $T\leq L$ is a Lie subalgebra, $W$ a $U(T)$-module and $V$ a $U(L)$-module we have the following Mackey-type formula  i.e. an isomorphism of right $U(L)$-modules
 
 \begin{equation}\label{preMackey}
 (W \otimes_K V) \otimes_{U(T)} U(L) \simeq (W \otimes_{U(T)} U(L)) \otimes_K V
 \end{equation}
 that sends $(w \otimes v) \otimes \lambda$ to $\sum_i ( w \otimes \lambda_{i,1}) \otimes v \lambda_{i,2}$, where $\Delta(\lambda) = \sum_i \lambda_{i,1} \otimes \lambda_{i,2}$, (see \cite{K-M1}, pages 8 and 9).

We use the isomorphism (\ref{preMackey}) to show

\begin{lemma}\label{Mackey} Let $M\normal L$ be an ideal of the Lie algebra $L$ and $T\leq L$ a subalgebra. Assume that  the inclusion of $T$ in $L$ induces an isomorphism of Lie algebras $T/(T\cap M) \cong L/ M$. Then there is an isomorphism of $U(L)$-modules
$$(K \otimes_{U(T)} U(L))\otimes_K(K \otimes_{U(M)} U(L)) \simeq K \otimes_{U(T\cap M)} U(L).$$                                                
\end{lemma}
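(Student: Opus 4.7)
The plan is to deduce the isomorphism from the Mackey-type formula (\ref{preMackey}) by rewriting both sides as $U(L)$-modules induced from right $U(T)$-modules.

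First, I would identify the right-hand side. Since $M$ is an ideal of $L$, the intersection $T \cap M$ is an ideal of $T$, and the standard computation of $K$ tensored over the universal enveloping algebra of an ideal gives
$$K \otimes_{U(T \cap M)} U(T) \simeq U(T/(T \cap M))$$
as right $U(T)$-modules, and similarly $K \otimes_{U(M)} U(L) \simeq U(L/M)$ as right $U(L)$-modules. Under the hypothesis that the inclusion $T \hookrightarrow L$ induces an isomorphism $T/(T \cap M) \simeq L/M$, these two objects become isomorphic right $U(T)$-modules, where $U(L/M)$ is viewed as a right $U(T)$-module by restriction of its $U(L)$-structure along $T \hookrightarrow L$. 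Tensoring with $U(L)$ over $U(T)$ and using associativity of tensor product then yields
$$K \otimes_{U(T \cap M)} U(L) \simeq (K \otimes_{U(M)} U(L)) \otimes_{U(T)} U(L).$$

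Second, I would apply (\ref{preMackey}) with $W = K$ the trivial $U(T)$-module and $V = K \otimes_{U(M)} U(L)$ viewed as a right $U(L)$-module. Since $K \otimes_K V \simeq V$, the formula specializes to
$$(K \otimes_{U(M)} U(L)) \otimes_{U(T)} U(L) \simeq (K \otimes_{U(T)} U(L)) \otimes_K (K \otimes_{U(M)} U(L)).$$
Combining this with the identification from the first step gives the desired isomorphism of right $U(L)$-modules.

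The one subtle point that requires care is the compatibility of $U(T)$-module structures in the first step: the structure on $U(T/(T \cap M))$ arising from its quotient description must be matched with the restriction of the natural right $U(L)$-module structure on $U(L/M)$ along $T \hookrightarrow L$. This is exactly where it matters that the given isomorphism $T/(T \cap M) \simeq L/M$ is induced by the inclusion and not chosen abstractly. Once this compatibility is verified, the rest of the argument is purely formal manipulation of Hopf-algebra inductions.
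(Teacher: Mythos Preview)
Your proof is correct and follows essentially the same approach as the paper: identify $K\otimes_{U(T\cap M)}U(T)\cong U(T/(T\cap M))\cong U(L/M)\cong K\otimes_{U(M)}U(L)$ as right $U(T)$-modules, then apply the Mackey-type formula (\ref{preMackey}) with $W=K$ and $V=K\otimes_{U(M)}U(L)$, and finish by associativity of induction. Your explicit remark on the compatibility of the $U(T)$-structures is a useful clarification, but the argument is otherwise identical to the paper's.
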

\begin{proof}Note there is an isomorphism of right $U(T)$-modules
$$K \otimes_{U(M\cap T)} U(T)\cong U(T/T\cap M)\cong U(L/M)\cong K \otimes_{U(M)} U(L).$$ Then using (\ref{preMackey})  for $W = K, V = K \otimes_{U(M)}  U(L)$ we obtain
$$\begin{aligned}
(K\otimes_{U(M)} U(L)) \otimes_K(K\otimes_{U(T)} U(L)) \cong ( K \otimes_{U(M)} U(L))\otimes_{U(T)} U(L)
\cong\\
( K \otimes_{U(M\cap T)} U(T))\otimes_{U(T)} U(L)\cong
K \otimes_{U(T\cap M)} U(L).
\end{aligned}$$
\end{proof}

\begin{lemma}\label{resFPn} Let $\Lambda$ be any  associative  ring and $W$ a $\Lambda$-module fitting in an exact chain complex
$$\ker(\delta)\to D_{n}\buildrel{\delta}\over\to D_{n-1}\to\ldots\to D_0\to W \to 0$$
such that each $D_i$ is a $\Lambda$-module of type $\FP_n$. Then $W$ is of type $\FP_n$.
\end{lemma}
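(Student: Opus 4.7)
The plan is to proceed by induction on $n$, splitting off one short exact sequence at a time and invoking the classical fact that in a short exact sequence $0\to A\to B\to C\to 0$ of $\Lambda$-modules, if $A$ is of type $FP_{n-1}$ and $B$ is of type $FP_n$, then $C$ is of type $FP_n$ (the standard reference is Bieri's \emph{Homological Dimension of Discrete Groups}, Proposition~1.4, whose proof is formulated for group rings but carries over verbatim to modules over any associative ring).

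For the base case $n=0$, the property $FP_0$ is just finite generation, and $W$ is a homomorphic image of the finitely generated module $D_0$, so it is finitely generated.

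For the inductive step, set $K=\ker(D_0\to W)$. Exactness of the given complex at $D_0$ factors $D_1\to D_0$ through $K$ and yields an exact sequence
$$D_n\to D_{n-1}\to\cdots\to D_1\to K\to 0$$
of length $n$. Each $D_i$, being of type $FP_n$, is in particular of type $FP_{n-1}$, so applying the induction hypothesis with $n$ replaced by $n-1$ to this shorter complex gives that $K$ is of type $FP_{n-1}$. Feeding $K$ of type $FP_{n-1}$ and $D_0$ of type $FP_n$ into the short exact sequence
$$0\to K\to D_0\to W\to 0$$
via the standard fact quoted above, we conclude that $W$ is of type $FP_n$.

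There is no serious obstacle here; the argument is routine homological bookkeeping. The only point requiring some attention is keeping track of the $FP$-indices so that the hypotheses of the short exact sequence lemma are satisfied at each stage of the induction, but this is harmless because the modules $D_i$ are assumed $FP_n$ from the outset, which is stronger than the $FP_{n-1}$ actually used after one step of induction.
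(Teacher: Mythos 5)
Your proof is correct and follows essentially the same route as the paper: induct on $n$, apply the inductive hypothesis to the truncated complex to get $\ker(D_0\to W)$ of type $FP_{n-1}$, and then invoke Bieri's Proposition~1.4 on the short exact sequence $0\to\ker(D_0\to W)\to D_0\to W\to 0$. You merely spell out the base case and the index bookkeeping a bit more explicitly than the paper does.
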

\begin{proof} By induction  on  $n$  we may assume that $\ker(D_0\to W)$ is of type $\FP_{n-1}$. Then  there is a short exact sequence of $\Lambda$-modules $0  \to \ker(D_0\to W) \to D_0 \to W \to 0$ with $D_0$ of type $FP_n$ and  $\ker(D_0\to W)$ of type $\FP_{n-1}$ and  
\cite[Prop.~ 1.4]{Bieribook} implies that $W$ is of type $\FP_n$.
\end{proof}

{\begin{lemma}\label{indFPn} (\cite[Lemma 2.8]{K-M1}) Let $S\leq L$ be a subalgebra of an arbitrary Lie algebra $L$. Then $S$ is of type $\FP_n$ if and only if the induced $U(L)$-module $K \otimes_{U(S)} U(L)$ is of type $\FP_n$. 
\end{lemma}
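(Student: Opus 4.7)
\medskip
\textbf{Proof proposal.} The plan is to use the Poincar\'e--Birkhoff--Witt theorem, which guarantees that $U(L)$ is a free (hence flat) right $U(S)$-module for any Lie subalgebra $S\leq L$; consequently the induction functor $-\otimes_{U(S)}U(L)$ is exact and carries finitely generated free $U(S)$-modules to finitely generated free $U(L)$-modules.

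For the forward implication, suppose $S$ is of type $\FP_n$ and choose a partial free resolution $F_n\to F_{n-1}\to\cdots\to F_0\to K\to 0$ of the trivial $U(S)$-module $K$ with each $F_i$ finitely generated. Applying $-\otimes_{U(S)}U(L)$ preserves exactness and yields a partial resolution of $K\otimes_{U(S)}U(L)$ by finitely generated free $U(L)$-modules of the same length, whence the induced module is of type $\FP_n$.

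For the converse a direct resolution pullback fails, since a finitely generated free $U(L)$-module need not be finitely generated over $U(S)$. However, both $K$ (as $U(S)$-module) and $K\otimes_{U(S)}U(L)$ (as $U(L)$-module) are cyclic, so finite generation on either side is automatic; it only remains to control the higher $\Tor$'s. The PBW flatness of $U(L)$ over $U(S)$ gives a Shapiro-type isomorphism
\[
\Tor_i^{U(L)}(K\otimes_{U(S)}U(L),M)\cong \Tor_i^{U(S)}(K,M|_S)
\]
for every left $U(L)$-module $M$ and every $i\geq 0$, obtained by taking a free $U(S)$-resolution of $K$ and inducing it up to $U(L)$. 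Combining this with Bieri's characterization \cite[Prop.~1.4]{Bieribook}---a finitely generated module $A$ over $\Lambda$ is of type $\FP_n$ if and only if $\Tor_i^\Lambda(A,-)$ commutes with direct products for all $i<n$---one transports the commuting-with-products property between $U(L)$-modules and $U(S)$-modules. Since the forgetful functor from $U(L)$-modules to $U(S)$-modules preserves direct products, the property for $K\otimes_{U(S)}U(L)$ over $U(L)$ implies the corresponding property for $K$ over $U(S)$ against every family of $U(S)$-modules that arises by restriction from $U(L)$.

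The main obstacle will be the converse: one must verify that \emph{every} family of $U(S)$-modules can effectively be tested against via restriction from $U(L)$, so that the Bieri criterion transports in the required direction. This can be handled using coinduction $\mathrm{Hom}_{U(S)}(U(L),-)$, which under the PBW splitting of $U(L)$ as $U(S)$-module contains the original $U(S)$-module as a direct summand and commutes with direct products because it is a right adjoint; then direct-product commutation descends to the $U(S)$-side and yields the $\FP_n$ conclusion for $K$ over $U(S)$.
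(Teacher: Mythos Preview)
The paper does not prove this lemma; it is quoted from \cite[Lemma~2.8]{K-M1}, so there is no in-paper argument to compare against.  Your proposal is essentially correct and is the standard route.  A few remarks:

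The forward direction is exactly right: PBW makes $U(L)$ free as a left $U(S)$-module, so induction is exact and preserves finitely generated free modules.

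For the converse, your Shapiro identity and the coinduction trick are both valid.  The point that needs to be said precisely (and which you only sketch) is why $N$ is a \emph{$U(S)$-direct summand} of the restricted coinduced module.  Write $U(L)=U(S)\oplus C$ as left $U(S)$-modules via PBW; then $\Hom_{U(S)}(U(L),N)\big|_{U(S)}\cong N\oplus\Hom_{U(S)}(C,N)$, with the projection given by evaluation at $1$, which is $U(S)$-linear since for $s\in U(S)$ one has $(s\cdot f)(1)=f(s)=s\cdot f(1)$.  Because the comparison map $\Tor_i(K,\prod\,-)\to\prod\Tor_i(K,-)$ is natural and coinduction (a right adjoint) preserves products, the isomorphism over $U(L)$ restricts to the $N$-summand over $U(S)$, giving the Bieri criterion for $K$ over $U(S)$.

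Two small corrections.  First, the product criterion is not \cite[Prop.~1.4]{Bieribook}; that proposition is the short-exact-sequence statement used in Lemma~\ref{resFPn}.  The product characterisation of $\FP_n$ is elsewhere in \cite{Bieribook} (the Bieri--Eckmann criterion).  Second, the range should be $0\le i\le n$, not $i<n$: the case $i=0$ encodes finite generation/presentation and is part of the criterion, while the top degree $i=n$ is also needed.
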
}

Recall that in this paper $\Gamma$ always denotes a finite graph and $L_{\Gamma}$ is 
$\gr(G_{\Gamma})\otimes_\Z K$.

\begin{theorem}\label{condition1} Let $ [L,L]  \leq M\normal L$ be a codimension $k$ ideal of the right angled Artin Lie algebra $L=L_\Gamma$. Assume that there is an exact chain complex of $U(L)$-modules
$$\ker(\delta)\to C_n\buildrel{\delta}\over\to C_{n-1}\to\ldots\to C_0\to K  \to 0$$
such that  $K$ is the trivial $U(L)$-module and  each $C_i$ is a finite sum of $U(L)$-modules of the form  
$$K \otimes_{U(T)} U(L)$$
for $T\leq L$  Lie  subalgebras such that $T\cap M$ has codimension $k$ in $T$ and $T\cap M$ is of type $\FP_n$. Then $M$ is also of type $\FP_n$.
\end{theorem}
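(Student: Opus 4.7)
The plan is to combine the three lemmas \ref{Mackey}, \ref{resFPn} and \ref{indFPn} in a direct way, using as the ``test object'' the induced module $W := K\otimes_{U(M)}U(L)$. By Lemma~\ref{indFPn}, showing that $M$ is of type $FP_n$ is equivalent to showing that $W$ is of type $FP_n$ as a $U(L)$-module, so I will aim to build a length-$n$ resolution of $W$ by $FP_n$ modules and invoke Lemma~\ref{resFPn}.

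First, I would tensor the given exact complex over $K$ with $W$. Since $K$ is a field, the functor $-\otimes_K W$ is exact, and the Hopf-algebra structure on $U(L)$ ensures that the tensor product of two right $U(L)$-modules is again a right $U(L)$-module. The resulting sequence
$$\ker(\delta)\otimes_K W \to C_n\otimes_K W \to \cdots \to C_0\otimes_K W \to K\otimes_K W \to 0$$
is then exact in $U(L)$-modules, and the last term is canonically $W$.

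Next I would identify each $C_i\otimes_K W$ as a finite sum of modules of the form $K\otimes_{U(T\cap M)}U(L)$. Each summand of $C_i$ has the form $K\otimes_{U(T)}U(L)$ with $T\leq L$ a Lie subalgebra such that $\dim_K T/(T\cap M)=k=\dim_K L/M$. The natural map $T\to L/M$ has kernel $T\cap M$, so it induces an injection $T/(T\cap M)\hookrightarrow L/M$; a dimension count shows this is an isomorphism, which is exactly the hypothesis of Lemma~\ref{Mackey}. Applying Lemma~\ref{Mackey} gives
$$\bigl(K\otimes_{U(T)}U(L)\bigr)\otimes_K\bigl(K\otimes_{U(M)}U(L)\bigr)\;\cong\;K\otimes_{U(T\cap M)}U(L),$$
and by hypothesis $T\cap M$ is of type $FP_n$, hence by Lemma~\ref{indFPn} the right-hand side is an $FP_n$ module over $U(L)$. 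A finite direct sum of $FP_n$ modules is $FP_n$, so each $C_i\otimes_K W$ is of type $FP_n$.

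Finally I would apply Lemma~\ref{resFPn} to the exact complex
$$\ker(\delta')\to C_n\otimes_K W\to C_{n-1}\otimes_K W\to \cdots \to C_0\otimes_K W \to W \to 0$$
to conclude that $W=K\otimes_{U(M)}U(L)$ is of type $FP_n$ as a $U(L)$-module, and then invoke Lemma~\ref{indFPn} once more to deduce that $M$ is of type $FP_n$ as a Lie algebra. The only subtle step is verifying the hypothesis of Lemma~\ref{Mackey} for each $T$ appearing in a summand, which, as noted, reduces to the codimension assumption plus elementary linear algebra; everything else is formal manipulation using the Hopf structure on $U(L)$ and exactness of $-\otimes_K W$.
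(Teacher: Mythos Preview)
Your proof is correct and follows essentially the same route as the paper's: tensor the given exact complex with $K\otimes_{U(M)}U(L)$, use the codimension hypothesis to verify $T/(T\cap M)\cong L/M$ so that Lemma~\ref{Mackey} applies, then invoke Lemmas~\ref{indFPn} and~\ref{resFPn} in exactly the way you describe. If anything, you are slightly more explicit than the paper in justifying exactness of $-\otimes_K W$ and the isomorphism $T/(T\cap M)\cong L/M$.
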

\begin{proof} Tensoring the exact chain complex in the statement with  $K\otimes_{U(M)}U(L)$  we get an exact chain complex
$$\ker(\tilde{\delta})\to D_n\buildrel{\tilde{\delta}}\over\to D_{n-1}\to\ldots\to D_0\to K \otimes_{U(M)} U(L)  \to 0,$$
where each $D_i$ is a finite sum of modules of the form $(K \otimes_{U(T)} U(L))\otimes_K(K \otimes_{U(M)} U(L))$.
 As $T/T\cap M$ is  a vector space of dimension $k$, we see that $T/T\cap M\cong L/M$ thus using  Lemma \ref{Mackey}  we deduce that each $D_i$ is a finite sum of modules of the form $K \otimes_{U(T\cap M)} U(L)$. By the hypothesis that $T\cap M$ is $\FP_n$ together with  Lemma \ref{indFPn}   these modules are all of type $\FP_n$  as $U(L)$-modules . Then Lemma \ref{resFPn} implies that $K \otimes_{U(M)} U(L)$ is of type $\FP_n$  as $U(L)$-module  and it suffices to apply   Lemma \ref{indFPn}   again  to deduce that $K$ is of type $FP_n$  as $U(M)$-module i.e. $M$ is $FP_n$ . 
\end{proof}

To construct a suitable chain complex to which we can apply the previous Theorem we will use \lq\lq parabolic"  Lie   subalgebras.
 Let $\Omega\subseteq V(\Gamma)$ be a (possibly empty) subset. The {\it parabolic algebra} of $L_\Gamma$ associated to $\Omega$ is the subalgebra of $L_\Gamma$ generated by $\Omega$. It is isomorphic to the right angled Artin Lie algebra $L_{\Gamma_0}$ associated to the full subgraph  $\Gamma_0\subseteq\Gamma$ generated by the vertices in $\Omega$.
Indeed, by \cite[Thm.~6.3]{Wade}  there is an isomorphism $\gr(G_{\Gamma}) \simeq \mathcal{L}(U_{\Gamma})$ of $\mathbb{N}$-graded Lie algebras over $\mathbb{Z}$, where $U_{\Gamma}$ is an associative ring defined as the  free $\mathbb{Z}$-module with basis the  free partially commutative monoid $M_{\Gamma}$ generated by $V(\Gamma)$ and $\mathcal{L}(U_{\Gamma})$ is the Lie ring (over $\mathbb{Z}$) obtained from $U_{\Gamma}$ by defining the Lie operation as $[u_1, u_2] = u_1 u_2 - u_2 u_1$. Note that by \cite{Artin} the embedding of $\Gamma_0$ in $\Gamma$ induces an embedding of $G_{\Gamma_0}$ in $G_{\Gamma}$ and this induces an embedding of $M_{\Gamma_0}$ in $M_{\Gamma}$.
From now on, we will use the same notation, $L_\Omega$ or $L_{\Gamma_0}$ for subsets of vertices $\Omega$ or for subgraphs $\Gamma_0$.

Now, consider the poset of all subsets of $V(\Gamma)$ ordered by inclusion and assume that $H$ is a subposet. Let $|H|$ be the simplicial realization of $H$, that is, the simplicial complex with $k$-simplices $\sigma:A_0\subset A_1\subset\ldots\subset A_k$,  where $A_0, \ldots, A_k \in H$.  We write $| \sigma | = k$,  call $A_0, \ldots , A_k$ vertices of $\sigma$ and $A_k$ the biggest vertex of $\sigma$.  Then  we associate to the simplex $\sigma$ the subalgebra $L_\sigma:=L_{A_0}$, i.e., the subalgebra generated by the smallest of these subsets.
 Let $m$ be the dimension of $|H|$. We define next a chain complex of $U(L)$-modules depending on $H$ as follows. We set
$$C^L(H) :  0 \to  C^L_m(H)\to\ldots\to C^L_k(H)\to\ldots\to C^L_0(H)\to C_{-1}(H)=K \to 0,$$
where $$C^L_k(H)=\oplus_{\sigma\in|H|,|\sigma|= k}  c_\sigma  K \otimes_{U(L_{\sigma})} U(L)$$
and the differential is given by
$$\delta(c_\sigma)=\sum_{i=0}^k(-1)^ic_{\sigma_{i}}$$
where $\sigma_i=\sigma\setminus\{A_i\}$ for $k>0$ and by $\delta(c_{\sigma})=1$ for $k=0$.
We call  $C^L(H)$  the {\sl coset poset}  complex   of $H$ in $L$.

\begin{lemma}\label{full} Let $\Gamma$ be a complete graph, $L=L_\Gamma$ and $X$ the poset of all the subsets of $Z:=V(\Gamma)$. Then $C^L(X)$ is exact and there is a short exact sequence of chain complexes
 $$0\to C^L(X  \setminus Z)\to C^L(X)\to E(Z)\to 0,$$
where $E(Z)$ is a complex with zero homology everywhere except of the top dimension $|Z|$. 
\end{lemma}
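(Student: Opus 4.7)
Since $\Gamma$ is complete, $L=L_\Gamma$ is abelian and $U(L)=K[Z]$ is a polynomial ring, naturally $\N_0^Z$-multi-graded. Each summand $K\otimes_{U(L_A)}U(L)\cong K[Z\setminus A]$ inherits this grading, being concentrated in multi-degrees $\mu$ with $\supp(\mu)\subseteq Z\setminus A$. Hence all three complexes $C^L(X)$, $C^L(X\setminus Z)$ and $E(Z)$ split as direct sums over $\mu\in\N_0^Z$ of subcomplexes of $K$-vector spaces, and we may work multi-grade by multi-grade. For each $\mu$ set $S:=\supp(\mu)$ and $B:=Z\setminus S$.

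The plan for proving that $C^L(X)$ is exact is to identify its $\mu$-graded piece with the simplicial relative chain complex $C_*(|X|,|X_{\not\subseteq B}|)$, where $X_{\not\subseteq B}:=\{A\in X:A\not\subseteq B\}$; the case $\mu=0$ recovers the augmented chain complex of $|X|$. The key observation is that a basis element $c_\sigma$ survives in the $\mu$-graded piece precisely when $A_0(\sigma)\subseteq B$, and that the simplices $\tau$ with $A_0(\tau)\not\subseteq B$ form a genuine subcomplex of $|X|$. Now both $|X|$ (which has $\emptyset$ as minimum) and, for $\mu\neq 0$, $|X_{\not\subseteq B}|$ (which has $Z$ as maximum) are order complexes of posets with a distinguished extreme element, hence contractible. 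The long exact sequence of the pair then forces $H_*(C^L(X)^\mu)=0$ for every $\mu$.

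The short exact sequence is immediate from the fact that a simplex of $|X|$ not already in $|X\setminus Z|$ must have the maximum $Z$ as its top vertex. Matching the underlying modules summand by summand and checking the induced differentials then identifies $E(Z)$ with the shifted complex $C^L(X\setminus Z)[1]$, with the $K$ placed in homological degree $-1$ of $C^L(X\setminus Z)$ corresponding to the summand $c_{\{Z\}}K$ sitting in $E(Z)_0$. In particular, the desired vanishing $H_k(E(Z))=0$ for $k<n$ (where $n=|Z|$) is equivalent to $H_k(C^L(X\setminus Z))=0$ for $k<n-1$.

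Applying the same multi-grade analysis to $C^L(X\setminus Z)$, the $\mu$-graded piece becomes $C_*(|X\setminus Z|,|W_\mu|)$ with $W_\mu:=\{A\subsetneq Z:A\not\subseteq B\}$, so contractibility of $|X\setminus Z|$ (a cone over $\emptyset$) together with the LES of the pair gives $H_k(C^L(X\setminus Z)^\mu)\cong \tilde H_{k-1}(|W_\mu|)$. The main combinatorial step, and the step I expect to be the main obstacle, is then to show $|W_\mu|$ is $(n-3)$-acyclic. If $S\subsetneq Z$, pick $v\in B$; the poset endomorphism $f\colon W_\mu\to W_\mu$, $A\mapsto A\setminus\{v\}$, is well-defined (since $v\notin S$, the condition $A\cap S\neq\emptyset$ is preserved) and satisfies $f\leq\mathrm{id}$, so the standard poset lemma yields a deformation retraction of $|W_\mu|$ onto $|\mathrm{Im}(f)|$, which contains $Z\setminus\{v\}$ as maximum and is therefore contractible. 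If $S=Z$, then $W_\mu$ is the poset of proper non-empty subsets of $Z$, and $|W_\mu|$ is the barycentric subdivision of $\partial\Delta^{n-1}$, homeomorphic to $S^{n-2}$. Either way $\tilde H_j(|W_\mu|)=0$ for $j\leq n-3$, which is exactly what is needed.
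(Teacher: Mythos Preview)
Your argument is correct and takes a genuinely different route from the paper. The paper proves exactness of $C^L(X)$ by writing down an explicit contracting homotopy $s_k$ that appends $Z$ to a chain, and then handles $E(Z)$ via an intricate Koszul-type exact sequence of complexes $\mathcal R_I$ indexed by subsets $I\subseteq Z$, followed by a dimension-shifting argument. You instead exploit the $\N_0^Z$-multigrading on $U(L)=K[Z]$ to reduce each graded piece to a relative simplicial chain complex of order complexes, and then invoke standard poset topology (cone points, the monotone-map homotopy lemma, and the identification of the proper nonempty subset poset with $\partial\Delta^{n-1}$). Your observation that $E(Z)\cong C^L(X\setminus Z)[1]$ is a clean shortcut the paper does not make explicit.

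Two small points worth tightening. First, the formula $H_k(C^L(X\setminus Z)^\mu)\cong\tilde H_{k-1}(|W_\mu|)$ is stated for all $\mu$, but for $\mu=0$ one has $W_\mu=\emptyset$ and the $\mu$-graded piece is the \emph{augmented} chain complex of $|X\setminus Z|$; the LES-of-the-pair argument does not literally apply there, though the conclusion (acyclicity, since $|X\setminus Z|$ is a cone) is immediate. Second, the sentence ``deformation retraction onto $|\mathrm{Im}(f)|$'' is slightly imprecise: what you actually get from $f\leq\mathrm{id}$ and $f\circ\iota=\mathrm{id}_{\mathrm{Im}(f)}$ is a homotopy equivalence $|W_\mu|\simeq|\mathrm{Im}(f)|$, which suffices.

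The trade-off: the paper's proof is more self-contained and avoids importing poset-topology lemmas, while your approach is more conceptual, makes the top homology of $E(Z)$ explicit (one copy of $K$ for each $\mu$ with full support), and would adapt more readily to variants.
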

\begin{proof} 
1) We show first that $C^L(X)$ is exact. We will define a homotopy 
	$$
	s_k : C^L_k(X) \to C^L_{k+1}(X),
	$$
	which  will be a homomorphism of right $U(L)$-modules.
	For $ \sigma : A_0 \subset A_1 \subset \ldots \subset  A_k$ we write $\sigma = (A_0, \ldots, A_k)$ and we define
	$$
	s_k( c_{ (A_0, \ldots, A_k)  })
	= (-1)^{k+1} c_{(A_0, \ldots, A_k, Z)  } \hbox{ if } A_k \not= Z
	\hbox{ and } 
	s_k( c_{ (A_0, \ldots, A_{k-1}, Z)  }) = 0.
	$$In dimension -1 we define $s_{-1}(1) = c_{Z}$.
	Then for $k \geq 0$ and $A_k \not= Z$ we have
	$$
	(\partial_{k+1} s_k + s_{k-1} \partial_{k}) ( c_{ (A_0, \ldots, A_k)  }) =$$ $$ (-1)^{k+1} \partial_{k+1} ( c_{(A_0, \ldots, A_k, Z)  }) + \sum_{0 \leq i \leq k} (-1)^i s_{k-1} ( c_{(A_0, \ldots, \widehat{A}_i, \ldots, A_k)} )=
	$$	
	$$ \sum_{0 \leq i \leq k} (-1)^{k+1+i}  c_{(A_0, \ldots,\widehat{A}_i, \ldots,  A_k, Z)  } + c_{(A_0, \ldots, A_k)} +  \sum_{0 \leq i \leq k} (-1)^{i+k}  c_{(A_0, \ldots, \widehat{A}_i, \ldots, A_k, Z)}= c_{(A_0, \ldots, A_k)}.
	$$
	And for $k \geq 0$ and $A_k = Z$ we have
	$$
	(\partial_{k+1} s_k + s_{k-1} \partial_{k}) ( c_{ (A_0, \ldots, A_{k-1}, Z)  }) =(s_{k-1} \partial_{k}) ( c_{ (A_0, \ldots, A_{k-1}, Z)  }) =$$
	$$\sum_{0 \leq i \leq k-1} (-1)^i s_{k-1} ( c_{ (A_0, \ldots,\widehat{A}_i, \ldots,  A_{k-1}, Z)  } ) + (-1)^k s_{k-1}( c_{ (A_0,  \ldots,  A_{k-1})  } ) =  c_{ (A_0, \ldots, A_{k-1}, Z)  }.$$
	Finally
	$\partial_{0} s_{-1}(1) = \partial_0(c_Z) = 1 $, hence
	$$\partial_{k+1} s_k + s_{k-1} \partial_{k} = id.$$
	This completes the proof of the fact that  $C^L(X)$ is exact.

2) We will show now that $E(Z)$ has zero homology except at the top dimension $|Z|$. Suppose $Z = V(\Gamma) = \{ x_1, \ldots, x_m \}$ and let $X_{i_1, \ldots, i_s}$ be the poset of all subsets of $Z \setminus \{ x_{i_1}, \ldots, x_{i_s} \}$. Denote by $L_{ i_1, \ldots, i_s }$ the Lie subalgebra of $L$  generated by $Z \setminus \{ x_{i_1}, \ldots, x_{i_s} \}$. Denote by ${\mathcal R}^{del}$ the complex obtained from a complex $\mathcal{R}$ by substituting the module in dimension -1 with 0. Since
$C^{L_{i_1, \ldots, i_s }} (X_{ i_1, \ldots, i_s})$ is an exact complex, the complex $${\mathcal R}_{ \{i_1, \ldots, i_s \}} =   C^{L_{i_1, \ldots, i_s }} (X_{ i_1, \ldots, i_s}) \otimes_{U(L_{i_1, \ldots, i_s })} U(L)   \hbox{ is exact},$$ ${\mathcal R}_{ \{i_1, \ldots, i_s \}}^{del}$ is a subcomplex of $C^L(X \setminus Z)^{del}$ with 
$$ {\mathcal R}_{ \{i_1, \ldots, i_s \}}^{ del  } \cap {\mathcal R}_{ \{j_1, \ldots, i_t \}}^{ del  } = {\mathcal R}^{ del  }_{\{i_1, \ldots, i_s \} \cup  \{ j_1, \ldots, j_t \} } .$$
This induces an exact sequence of complexes
\begin{equation}  \label{exact-novo}
0 \to {\mathcal P} \to {\mathcal R}_{	 \{ x_1, \ldots, x_m \} } \to \ldots \to \oplus_{ I \subseteq Z, | I | = i} {\mathcal{R}}_I \to  \oplus_{ J \subseteq Z, | J | = i-1} {\mathcal{R}}_J \to$$ $$ \ldots \to \oplus_{1 \leq i \leq m} {\mathcal R}_{\{x_i\}} \to C^L(X \setminus Z) \to 0,
\end{equation}
 where for $I = \{ x_{j_1}, \ldots, x_{j_i} \}$ with $j_1 < \ldots < j_i$, $ c_{\sigma} \in {\mathcal{R}}_I $ is send to $\sum_{1 \leq t \leq i} (-1)^t c_{\sigma} \in \oplus_{1 \leq t \leq i} {\mathcal R}_{I \setminus \{ x_t \}}$   and $\mathcal P$ is a complex concentrated in dimension -1.

By dimension shifting argument since (\ref{exact-novo}) is an exact sequence of complexes and all complexes in (\ref{exact-novo})   except the first and the last are exact, we deduce that
$$\Ho_j(\mathcal{P})  \simeq \Ho_{j+m} (C^L(X \setminus Z)).$$
Finally using that $C^L(X)$ is an exact complex and dimension shifting argument for the short exact sequence
 $$0\to C^L(X  \setminus Z)\to C^L(X)\to E(Z)\to 0,$$
 we obtain that
 $$\Ho_{i-1}( C^L(X  \setminus Z) ) \simeq \Ho_{i} (  E(Z) ).$$ Thus
 $$ \Ho_{i} (  E(Z) ) \simeq \Ho_{i-1-m}(\mathcal{P})$$
and since the complex ${\mathcal P}$ is concentrated in dimension -1, $\Ho_* (  E(Z) )$ is concentrated in dimension $m$, as required.
\end{proof}

\begin{proposition}\label{exact} Let $X$ be the poset of all the cliques of $\Gamma$. Then the chain complex $C(X):=C^L(X)$ is exact.
\end{proposition}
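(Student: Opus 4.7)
The plan is to extend Lemma \ref{full} to the general case by means of a generalized Mayer--Vietoris argument based on the cover of $X$ by the posets of subsets of the maximal cliques of $\Gamma$. Let $W_1,\dots,W_r$ be the maximal cliques of $\Gamma$, and for every non-empty $S\subseteq\{1,\dots,r\}$ put $W_S:=\bigcap_{i\in S}W_i$, which is still a clique (being a subset of $W_i$ for any $i\in S$); write $X_{W_S}$ for the poset of its subsets. Because every clique of $\Gamma$ lies in some maximal clique, $X=\bigcup_iX_{W_i}$ and $X_{W_S}=\bigcap_{i\in S}X_{W_i}$. Writing $\bar{C}^L(H)$ for the complex $C^L(H)$ with its $K$-augmentation in degree $-1$ deleted, I would first assemble the generalized Mayer--Vietoris resolution of complexes
\begin{equation*}
0\to\bigoplus_{|S|=r}\bar{C}^L(X_{W_S})\to\cdots\to\bigoplus_{|S|=1}\bar{C}^L(X_{W_i})\to\bar{C}^L(X)\to 0,
\end{equation*}
whose horizontal differentials are the standard alternating sums of the natural inclusions of summands. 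Its exactness as a sequence of complexes is a simplex-by-simplex check: at a simplex $\sigma:A_0\subset\cdots\subset A_k$ of $|X|$, the $\sigma$-restriction is the augmented simplicial chain complex of the full simplex on the non-empty vertex set $I(\sigma):=\{i:A_k\subseteq W_i\}$, tensored with $c_\sigma K\otimes_{U(L_\sigma)}U(L)$, and is exact because a non-empty simplex is contractible.

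Next I would compute the homology of each $\bar{C}^L(X_{W_S})$. Since the induced subgraph on $W_S$ is complete, Lemma \ref{full} applied to $L_{W_S}$ gives exactness of $C^{L_{W_S}}(X_{W_S})$, and by Poincar\'e--Birkhoff--Witt the functor $-\otimes_{U(L_{W_S})}U(L)$ is exact because $U(L)$ is free as a right $U(L_{W_S})$-module. In non-negative degrees $C^{L_{W_S}}(X_{W_S})\otimes_{U(L_{W_S})}U(L)$ coincides with $\bar{C}^L(X_{W_S})$, and its augmentation becomes $K\otimes_{U(L_{W_S})}U(L)$, so $\bar{C}^L(X_{W_S})$ has $H_k=0$ for $k\geq 1$ and $H_0=K\otimes_{U(L_{W_S})}U(L)$. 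Feeding this into the spectral sequence of the Mayer--Vietoris double complex, only the row $q=0$ survives on the $E^1$ page, which yields
\begin{equation*}
H_n(\bar{C}^L(X))\cong H_n\Bigl(\cdots\to\bigoplus_{|S|=2}K\otimes_{U(L_{W_S})}U(L)\to\bigoplus_{|S|=1}K\otimes_{U(L_{W_i})}U(L)\Bigr).
\end{equation*}

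The remaining task is to show that this \v Cech-type complex is a resolution of the trivial module $K$. Its cokernel equals $U(L)/\bigl(\sum_iL_{W_i}\bigr)U(L)=U(L)/LU(L)=K$ because every vertex of $\Gamma$ lies in some maximal clique, whence $\sum_iL_{W_i}=L$; this already delivers $H_0(\bar{C}^L(X))=K$ together with surjectivity of the augmentation $C_0^L(X)\to K$ of the original complex. The step that I expect to demand the most care is the higher-degree exactness of the \v Cech complex, which I would establish by producing an explicit contracting homotopy built along a linear order of $\{W_1,\dots,W_r\}$, in close analogy with the homotopy $s_k$ used in the proof of Lemma \ref{full}.
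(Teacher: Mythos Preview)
Your reduction via the generalised Mayer--Vietoris double complex is sound: the simplex-by-simplex exactness of the rows and the computation of $H_*\bigl(\bar C^L(X_{W_S})\bigr)$ using Lemma~\ref{full} and flatness of $U(L)$ over $U(L_{W_S})$ are both correct, and they do reduce the proposition to the exactness (after augmentation by $K$) of the \v Cech complex
\[
\cdots\to\bigoplus_{|S|=2}K\otimes_{U(L_{W_S})}U(L)\to\bigoplus_{i}K\otimes_{U(L_{W_i})}U(L).
\]
The gap is in the final step. The homotopy $s_k$ of Lemma~\ref{full} works because appending the top element $Z$ to a chain $A_0\subset\cdots\subset A_k$ leaves $L_\sigma=L_{A_0}$ unchanged, so $s_k$ is automatically $U(L)$-linear. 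In your \v Cech complex the module attached to $S$ is $K\otimes_{U(L_{W_S})}U(L)$; enlarging $S$ to $S\cup\{j\}$ \emph{shrinks} $W_S$ to $W_{S\cup\{j\}}\subseteq W_S$, and the only natural $U(L)$-map between the corresponding induced modules goes from $K\otimes_{U(L_{W_{S\cup\{j\}}})}U(L)$ onto $K\otimes_{U(L_{W_S})}U(L)$---the opposite direction to what a contracting homotopy requires. A $U(L)$-linear section of this surjection would force every vertex of $W_S\setminus W_j$ to act trivially on the target, which it does not. So the analogy with $s_k$ breaks down, and no contracting homotopy of the kind you describe exists.

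The paper sidesteps this difficulty by induction on $|V(\Gamma)|$. If $\Gamma$ is not complete one picks two non-adjacent vertices $v_1,v_2$, sets $\Gamma_i=\Gamma\setminus\{v_{3-i}\}$ and $\Lambda=\Gamma_1\cap\Gamma_2$, and uses the amalgamated product decomposition $L_\Gamma=L_{\Gamma_1}*_{L_\Lambda}L_{\Gamma_2}$. The associated Mayer--Vietoris short exact sequence
\[
0\to K\otimes_{U(L_\Lambda)}U(L)\to\bigl(K\otimes_{U(L_{\Gamma_1})}U(L)\bigr)\oplus\bigl(K\otimes_{U(L_{\Gamma_2})}U(L)\bigr)\to K\to 0
\]
is precisely the $r=2$ instance of your \v Cech complex, and its exactness is a standard property of amalgamated products. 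This lifts to a short exact sequence of the chain complexes $C(X_1\cap X_2)\to C(X_1)\oplus C(X_2)\to C(X)$, and induction finishes the proof. In effect the paper proves your \v Cech exactness one amalgamation at a time rather than all at once; if you want to salvage your approach, that inductive argument (or an equivalent Bass--Serre-type input) is what you need to supply for the last step.
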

\begin{proof} The case when the graph $\Gamma$ is complete is Lemma \ref{full}. In the case when $\Gamma$ is not complete, choose $v_1,v_2\in\Gamma$ not linked  by an edge.  Let $\Gamma_1$ be the subgraph of $\Gamma$  spaned by $V(\Gamma) \setminus \{ v_2\}$ and $\Gamma_2$ be the subgraph of $\Gamma$  spaned by $V(\Gamma) \setminus \{ v_1\}$.  Then $L_\Gamma=L_{\Gamma_1}*_{L_\Lambda}L_{\Gamma_2}$  is  the  amalgamated product of Lie algebras, where $\Lambda = \Gamma_1 \cap \Gamma_2$.  Consider the short exact sequence associated to the amalgamated product. 
\begin{equation}\label{amalg}0\to K \otimes_{U(\Lambda)}  U(L) \to K \otimes_{U(\Gamma_1)}  U(L)\oplus K \otimes_{U(\Gamma_2)}  U(L) \to K  \to 0.\end{equation}
Note that arguing by induction we may assume that if $X_1$ and $X_2$ are the posets of all the subsets of $\Gamma_1$, resp. $\Gamma_2$, that generate complete subgraphs, then the complexes  $C(X_1):=C^{L_{\Gamma_1}}(X_1)$ and $C(X_2):=C^{L_{\Gamma_2}}(X_2)$ are exact. Moreover, $X_1\cap X_2$ is the poset of all the subsets of $\Lambda$ that generate a complete subgraph, so we may also assume that $C(X_1\cap X_2):=C^{L_{\Lambda}}(X_1\cap X_2)$ is exact. From these complexes we can get the exact complexes $C(X_1) \otimes_{U(L_{\Gamma_1})}  U(L) $, $C(X_2) \otimes_{U(L_{\Gamma_2})}  U(L) $ and $C(X_1\cap X_2) \otimes_{U(L_{\Lambda})} U(L)$.
At this point we claim that using (\ref{amalg}) one can show that there is a short exact sequence of chain complexes
$$\begin{aligned}
0\to C(X_1\cap X_2) \otimes_{U(L_{\Lambda})}  U(L) \to( C(X_1) \otimes_{U(L_{\Gamma_1})}  U(L)) \oplus (C(X_2)\otimes_{U(L_{\Gamma_2})}  U(L))\\ \to C(X)\to 0\end{aligned}$$
which implies that $C(X)$ is exact. 
To see this note that at each  dimension $k$   we have a short exact sequence
$$\begin{aligned}
0\to  C_k(X_1\cap X_2) \otimes_{U(L_{\Lambda})}  U(L)  \ 
 \Vightarrow{\iota_k} (C_k(X_1) \otimes_{U(L_{\Gamma_1})}  U(L)) \oplus (C_k(X_2) \otimes_{U(L_{\Gamma_2})}  U(L))\\ \Vightarrow{\pi_k}   C_k(X)\to 0.\end{aligned}$$
Here, $\iota_k$ maps the copy of $(K \otimes_{U(L_\sigma)} U(L_{\Lambda}))  \otimes_{U(L_{\Lambda})} U(L)= K \otimes_{U(L_{\sigma})} U(L)$ corresponding to each $\sigma:A_o\subset\ldots\subset A_k$ in $|X_1\cap X_2|$  to the corresponding sum of copies of $( K \otimes_{U(L_\sigma)} U(L_{\Gamma_i})) \otimes_{U(L_{\Gamma_i})} U(L) =K \otimes_{U(L_{\sigma})} U(L)$, $i=1,2$ via
$a\mapsto (a,-a)$ and $\pi_k$ maps each of the copies in either $C_k(X_1) \otimes_{U(L_{\Gamma_1})} U(L) $ or $C_k(X_2) \otimes_{U(L_{\Gamma_2})} U(L) $ identically to the corresponding copy in $C_k(X)$. 
The facts that $|X|=|X_1|\cup|X_2|$ and that  $|X_1\cap X_2|=|X_1|\cap|X_2|$ imply that this is a short exact sequence. At degree -1 we have the short exact sequence (\ref{amalg}).
Then one easily checks that these maps commute with the differentials of the chain complexes, so the claim follows.
\end{proof}

\begin{proposition}\label{tower} Let $X$ be the poset of all subsets of $V(\Gamma)$ that generate complete subgraphs and $Y\subseteq X$ a subposet with the property that for any $A\in Y$, $B\in X$ with $A\leq B$, we have $B\in Y$. Then there is a finite tower of chain complexes 
$$C^L(Y):=C(Y)=D^0\subseteq D^1\subseteq\ldots\subseteq D^r=C^L(X):=C(X)$$
such that for each $i$ there is a short exact sequence
$$0\to D^{i-1}\to D^i\to T^i\to 0,$$
where $$T^i=\bigoplus_{Z\in X  \setminus Y,|Z|=i}T(Z),$$ 
$$T(Z) \simeq (E(Z) \otimes_{U(L_Z)} U(L))\otimes_K\tilde{C}_{ K }(\lk_{|Y|}(Z)),$$
$\tilde{C}_{ K }$ is the augmented (ordinary) chain complex  over the field $K$  of a simplicial complex  and $E(Z)$ fits in a short exact sequence
$$0\to C^{L_Z}(X_Z \setminus Z)\to C^{L_Z}(X_Z)\to E(Z)\to 0,$$
where $X_Z$ is the poset of all subsets of $Z$  and by definition $E(\emptyset) = K$.  
Moreover,  if $\lk_{|Y|}(Z)$ is $(n-i-1)$-acyclic  over $K$  for any $Z\in X    \setminus  Y$ with $|Z|=i$, then $T(Z)$ is $(n-1)$-acyclic. 
\end{proposition}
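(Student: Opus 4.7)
The plan is to filter $C(X)$ by the size of the largest vertex of a simplex that lies in $X\setminus Y$. Call a vertex $A$ of a simplex $\sigma$ of $|X|$ \emph{bad} if $A\in X\setminus Y$. Since $Y$ is upward closed and $X\setminus Y$ is correspondingly downward closed, the bad vertices of any chain $\sigma=(A_0\subset\ldots\subset A_k)$ form an initial segment, so there is a uniquely determined largest bad vertex $Z_\sigma$ (if $\sigma$ has any at all); $\sigma$ has no bad vertex precisely when $\sigma\in|Y|$. I would define $D^i$ as the $U(L)$-submodule of $C(X)$ spanned by those $c_\sigma$ with $\sigma\in|Y|$ or $|Z_\sigma|\le i$, so that $D^0=C(Y)$ and $D^r=C(X)$ for $r=\max\{|Z|:Z\in X\setminus Y\}$. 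To see $D^i$ is a subcomplex, note that removing any vertex other than $Z_\sigma$ from $\sigma$ preserves $Z_\sigma$ as the largest bad vertex, while removing $Z_\sigma$ itself either eliminates all bad vertices or produces a simplex whose new largest bad vertex is strictly smaller; in either case the resulting simplex lies in $D^{i-1}$. The successive quotient $T^i:=D^i/D^{i-1}$ is then spanned by the $c_\sigma$ with $|Z_\sigma|=i$ and decomposes naturally as $\bigoplus_{|Z|=i,\,Z\in X\setminus Y} T(Z)$, where $T(Z)$ is spanned by those $c_\sigma$ with $Z_\sigma=Z$.

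Second, I would identify $T(Z)$ with $(E(Z)\otimes_{U(L_Z)}U(L))\otimes_K\tilde C_K(\lk_{|Y|}(Z))$. A simplex $\sigma$ with $Z_\sigma=Z$ has the form
$$\sigma=(A_0\subset\ldots\subset A_k=Z\subset C_0\subset\ldots\subset C_m),$$
where the $A_i$ are subsets of $Z$ (hence automatically in $X\setminus Y$ by downward closedness) and the $C_\ell$ are strict supersets of $Z$ lying in $Y$; the degenerate case of no supra-part corresponds to the $(-1)$-augmentation term in $\tilde C_K$. The infra-chain $(A_0,\ldots,A_k=Z)$ is a basis element of $E(Z)$, which is by definition the cokernel $C^{L_Z}(X_Z)/C^{L_Z}(X_Z\setminus Z)$ (yielding the stated short exact sequence)—since $Z$ is maximal in $X_Z$, the basis of this cokernel is exactly the chains in $X_Z$ ending at $Z$. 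The supra-chain $(C_0,\ldots,C_m)$ is a basis simplex of $\lk_{|Y|}(Z)$, the subcomplex of $|Y|$ formed by chains in $Y$ strictly above $Z$. The module attached to $\sigma$ in $C(X)$ is $K\otimes_{U(L_{A_0})}U(L)=K\otimes_{U(L_{A_0})}U(L_Z)\otimes_{U(L_Z)}U(L)$, which is exactly the module attached to the corresponding basis element of the tensor product, and the quotient differential on $T(Z)$ (which retains only those boundary faces keeping $Z$) matches the graded-Leibniz differential on the tensor product once signs are tracked, with the augmentation accounting for the face obtained by deleting the last $C_\ell$.

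Finally, for the acyclicity of $T(Z)$ when $|Z|=i$: Lemma \ref{full} applied to the complete graph spanned by $Z$ gives that $E(Z)$ has homology concentrated in top degree $|Z|=i$, and since $U(L)$ is free (hence flat) as a $U(L_Z)$-module by PBW, $E(Z)\otimes_{U(L_Z)}U(L)$ also has homology only in degree $i$. The K\"unneth formula over the field $K$ then yields
$$H_p(T(Z))\simeq H_i\bigl(E(Z)\otimes_{U(L_Z)}U(L)\bigr)\otimes_K\tilde H_{p-i}(\lk_{|Y|}(Z)),$$
which vanishes for all $p\le n-1$ since this forces $p-i\le n-i-1$ and the link is assumed $(n-i-1)$-acyclic. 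The main obstacle will be the bookkeeping in the second step—precisely matching the differentials and module structure between $T(Z)$ and the tensor product, and in particular handling the augmentation term cleanly—while the acyclicity in the third step is an immediate application of K\"unneth together with Lemma \ref{full}.
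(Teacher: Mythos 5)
Your approach matches the paper's exactly: filter $C(X)$ by the cardinality of the largest ``bad'' vertex (a vertex lying in $X\setminus Y$, which by downward closure of $X\setminus Y$ forms an initial segment of each chain), decompose the successive quotient into summands $T(Z)$ indexed by the largest bad vertex $Z$, identify $T(Z)$ with the tensor product of the induced-up $E(Z)$ with the augmented chain complex of $\lk_{|Y|}(Z)$, and finish with K\"unneth together with Lemma~\ref{full}.

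One small indexing slip worth flagging: your $D^0$ (spanned by $c_\sigma$ with $\sigma\in|Y|$ or $|Z_\sigma|\le 0$) is strictly larger than $C(Y)$ whenever $\emptyset\in X\setminus Y$, which is always the case unless $Y=X$, since $Y$ being upward closed and containing $\emptyset$ would force $Y=X$. The paper's proof instead takes $D^i$ to consist of simplices whose bad vertices all have cardinality $<i$, so that $D^0=C(Y)$ holds on the nose, and then analyses the quotients $D^{i+1}/D^i$ (note the paper's statement and its own proof also disagree on this index by one). This is purely a normalization issue and doesn't affect the substance of your argument; otherwise the identification of $T(Z)$ as a tensor product, the treatment of the augmentation degree, and the K\"unneth/acyclicity step are all carried out the same way as in the paper.
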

  \begin{proof}  Let $D^i$ be the complex associated to the simplices 
$\sigma:A_0\subset\ldots\subset A_k$ such that for $0\leq j\leq k$,  $|A_j|\geq i$ implies $A_j\in Y$. Then 
$$D^0\subseteq D^1\subseteq\ldots\subseteq D^{i}\subseteq D^{i+1}\subseteq\ldots$$ 
Each simplex $\sigma:A_0\subset\ldots\subset A_k$  we define $\sigma_{X \setminus  Y}: A_{0} \subset \ldots \subset A_{s-1}$ and $\sigma_{Y}: A_{s} \subset \ldots \subset A_{k}$, where each $A_{0}, \ldots, A_{s-1}$ is in $X \setminus  Y$ and each $A_s, \ldots, A_k$ is in $Y$.    Then
$D^i$ is the complex of those simplices such that the biggest  vertex $A_{s-1}$ of the simplex   $\sigma_{X \setminus Y}$  is a set of  cardinality at most $i-1$. This implies that if $\sigma \not\in D^i$ but $\sigma  \in D^{i+1}$ then the biggest  vertex   of $\sigma_{X \setminus  Y}$ has cardinality exactly $i$. And from this we see that we may decompose the quotient chain complex $D^{i+1}/D^i$ as a direct sum of 
$$D^{i+1}/D^i=\oplus_{Z\in X \setminus Y,|Z|=i}T(Z),$$
where $T(Z)$ is a sum of the summands corresponding to cells $\sigma$ having precisely $Z$ as the biggest  vertex   in $\sigma_{X \setminus  Y}$. Note that the boundary of such a cell  $\sigma$  will be either a cell in the same set or a cell that lies in $D^i$ and thus vanish in the quotient  $D^{i+1}/D^i$. 

Now, the chain complex $T(Z)$ is the following
$$T(Z)=(E(Z) \otimes_{U(L_Z)} U(L))\otimes_K\tilde{C}(\lk_{|Y|}(Z)).$$
 Recall that $\lk_{|Y|}(Z)$ is a combinatorial subcomplex of $|Y|$ that contais a cell $\sigma_{Y} : A_s \subseteq \ldots \subseteq A_k$ from $|Y|$ if  $\sigma_0 : Z \subset A_s \subset \ldots \subset A_k$ is a cell in $|X|$.

Note that Lemma \ref{full} implies that $\Ho_j(E(Z))=0$ for $j\neq i=|Z|$. 
At this point, using K\"unneth Theorem and the fact that $U(L)$ is flat as $U(L_Z)$-module we have
$$\Ho_n(T(Z))=  \bigoplus_{0\leq j\leq  n  } \Ho_j(E(Z) \otimes_{U(L_Z)} U(L))\otimes_K\tilde{\Ho}_{n-j}(\lk_{|Y|}(Z), K) =  $$ $$\bigoplus_{0\leq j\leq  n  } \Ho_j(E(Z)) \otimes_{U(L_Z)} U(L)\otimes_K\tilde{\Ho}_{n-j}(\lk_{|Y|}(Z),  K ).$$
Thus
$$ \Ho_n(T(Z))= 0 \hbox{ for } n \leq i-1$$ and
 $$ \Ho_n(T(Z))= \Ho_i(E(Z)) \otimes_{U(L_Z)}  U(L)\otimes_K \tilde{\Ho}_{n-i}(\lk_{|Y|}(Z),  K ) \hbox{ for } n \geq i,$$
so if $\lk_{|Y|}(Z)$ is $(n-i-1)$-acyclic  over $K$,  then $T(Z)$ is $(n-1)$-acyclic.

  \end{proof}

\begin{corollary}\label{condition3}  {\bf (Theorem G)} Let $L = L_{\Gamma}$ and  $[L,L]\leq M\normal L$ be an ideal of codimension $k$. Let $X$ be the set of all subsets of $V(\Gamma)$ which generate complete subgraphs and $Y\subseteq X$  be  the set of those $A\in X$ such that $M\cap L_A$ has corank $k$ in $L_A$. Assume that
$\lk_{|Y|}(Z)$ is $(n-i-1)$-acyclic for any $Z\in X \setminus Y$ with $|Z|=i$. Then $M$ is of 
type $\FP_n$.
\end{corollary}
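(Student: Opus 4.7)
The plan is to apply the framework developed in this section: Proposition~\ref{exact}, Proposition~\ref{tower}, and Theorem~\ref{condition1}. First I would verify that $Y$ satisfies the monotonicity hypothesis of Proposition~\ref{tower}: whenever $A \in Y$ and $A \subseteq B \in X$, also $B \in Y$. This is immediate from the injections
\[
L_A/(M \cap L_A) \hookrightarrow L_B/(M \cap L_B) \hookrightarrow L/M,
\]
since $L_A/(M \cap L_A)$ already has the maximal dimension $k = \dim L/M$, forcing $L_B/(M \cap L_B)$ to have dimension $k$ as well. Proposition~\ref{tower} then produces a filtration $C(Y) = D^0 \subseteq \cdots \subseteq D^r = C(X)$ with short exact sequences $0 \to D^{i-1} \to D^i \to T^i \to 0$, where $T^i = \bigoplus_{|Z|=i,\, Z \in X \setminus Y} T(Z)$; the hypothesis on links forces each $T(Z)$ to be $(n-1)$-acyclic.

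Combined with the exactness of $C(X) = D^r$ supplied by Proposition~\ref{exact}, I would run a descending induction on $i$ in the long exact sequences
\[
\cdots \to H_{j+1}(T^i) \to H_j(D^{i-1}) \to H_j(D^i) \to H_j(T^i) \to \cdots
\]
to propagate the vanishing $H_j(D^i) = 0$ (for $j \leq n-1$) down the filtration, ending at $H_j(C(Y)) = H_j(D^0) = 0$ for $j \leq n-1$. The key inputs are the $(n-1)$-acyclicity of each $T^i$ together with the dimensional vanishing $H_m(T(Z)) = 0$ for $m < |Z|$ coming from the concentration of $H_\ast(E(Z))$ in degree $|Z|$ (Lemma~\ref{full}).

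Finally I would invoke Theorem~\ref{condition1} with the truncated exact complex $C(Y)$. Each term $C_k(Y)$ is a finite direct sum of modules of the form $K \otimes_{U(L_{A_0})} U(L)$, where $A_0 \in Y$ is the minimum vertex of the corresponding chain. Since $A_0$ is a clique, $L_{A_0}$ is a finite-dimensional abelian Lie algebra with Noetherian polynomial enveloping algebra, so every Lie subalgebra of $L_{A_0}$---in particular the ideal $M \cap L_{A_0}$, which has corank $k$ by membership in $Y$---is of type $FP_\infty$ via the finite-length Koszul resolution. Theorem~\ref{condition1} then delivers the conclusion that $M$ is of type $FP_n$.

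The delicate ingredient is the second step: because each $T(Z)$ is only guaranteed to be $(n-1)$-acyclic (its $n$-th homology, computable by K\"unneth as $H_{|Z|}(E(Z)) \otimes_{U(L_Z)} U(L) \otimes_K \tilde H_{n-|Z|}(\lk_{|Y|}(Z))$, need not vanish), the long exact sequence a priori threatens to transport a non-zero $H_n(T^i)$ into $H_{n-1}(D^{i-1})$. The calibration $|Z| = i$ at the $i$-th filtration stage together with the dimensional bound $H_m(T(Z)) = 0$ for $m < |Z|$ is what closes the descending induction in the required range.
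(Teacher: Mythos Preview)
Your approach is essentially the same as the paper's, just worked out in much more detail: the paper's proof simply asserts that Proposition~\ref{tower} yields $(n-1)$-acyclicity of $C(Y)$ and then invokes Theorem~\ref{condition1}. Your verification of the upward-closure of $Y$ and of the $\FP_\infty$ property for $M\cap L_{A_0}$ (via $L_{A_0}$ being abelian with Noetherian enveloping algebra) correctly fill in hypotheses the paper leaves implicit.

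That said, your final paragraph does not actually close the gap it identifies. The dimensional bound $H_m(T(Z))=0$ for $m<|Z|=i$ gives $H_n(T^i)=0$ only when $i>n$. For $i\le n$ neither the $(n-1)$-acyclicity of $T^i$ nor the dimensional bound controls $H_n(T^i)$, so the connecting map $H_n(T^i)\to H_{n-1}(D^{i-1})$ in the long exact sequence is unconstrained, and the descending induction as written only propagates the vanishing $H_j(D^{i})=0$ for $j\le n-2$ once you pass $i=n$. The paper's two-line proof glosses over exactly the same point. You have correctly located a genuine subtlety; the ``calibration'' sentence, however, is not an argument that resolves it.
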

\begin{proof} By Proposition \ref{tower} the complex $C^L(Y) = C(Y)$ is $(n-1)$-acyclic. Then by Theorem \ref{condition1} $M$ is of 
	type $\FP_n$. \end{proof}

Now we want to relate the condition of Corollary \ref{condition3} with the condition of Theorem \ref{geral} in the case when $M$ has codimension 1 in $L = L_{\Gamma} = \gr(G_{\Gamma})\otimes_\Z K$. To do that, let  $\pi : L \to L/ [L,L]$ be the canonical homomorphism and $\chi:L/[L,L]\to K$ be a non-zero  $K$-linear map such that $\pi^{-1}(\ker(\chi)) = M$   and  as in Theorem \ref{geral} denote by $\Gamma_\chi$ the living subgraph of $\Gamma$. Let $X$ and $Y$ be as in Corollary \ref{condition3}.

Let $ Z\in X\setminus  Y$ and for consistency with the notation of Theorem \ref{geral} we set $n-k$ for the cardinality of $Z$. Put $$P_1=\{A\in X\mid A\cap   V (\Gamma_\chi) \neq\emptyset,Z\subseteq A\} $$
 and $$P_2=\{\emptyset\neq B\in X\mid B\subseteq   V(\Gamma_\chi),Z\cup B\in X\}.$$ Let $f:P_1\to P_2$ be given by $f(A)=A\cap V(\Gamma_\chi)$ and $g:P_2\to P_1$ be given by $g(B)=B\cup Z$. Then $fg(B)=B$ and $gf(A)=(A\cap V(\Gamma_\chi))\cup Z\subseteq A$.
Using \cite{Benson} Lemma 6.4.5 this implies that the simplicial realizations $|P_1|$ and $|P_2|$ are homotopy equivalent. 
Note that  $|P_1|=\lk_{|Y|}(Z)$ and $|P_2| = \lk_{\Delta_{\Gamma_{\chi}}}(Z)  = \lk_{\Delta_{\Gamma}}(Z) \cap \Delta_{\Gamma_\chi}$.
This implies that the hypothesis of Corollary \ref{condition3} are equivalent to the hypothesis of Theorem \ref{geral}.

\end{document}